\definecolor{labelkey}{rgb}{0,0.08,0.45}
\definecolor{refkey}{rgb}{0,0.6,0.0}
\definecolor{Brown}{rgb}{0.45,0.0,0.05}
\definecolor{dgreen}{rgb}{0.00,0.49,0.00}
\definecolor{dblue}{rgb}{0,0.08,0.75}
\newcommand{\minimize}[2]{\ensuremath{\underset{\substack{{#1}}}%
{\mathrm{minimize}}\;\;#2 }}
\newcommand{\menge}[2]{\big\{{#1} \mid {#2}\big\}}
\newcommand{\emp}{\ensuremath{{\varnothing}}}
\newcommand{\scal}[2]{\left\langle{#1}\mid {#2} \right\rangle} 
\newcommand{\pscal}[2]{\boldsymbol{\langle}{#1}\mid{#2}\boldsymbol{\rangle}} 
\newcommand{\exi}{\ensuremath{\exists\,}}
\newcommand{\zeroun}{\ensuremath{\left]0,1\right[}}   
\newcommand{\HH}{\ensuremath{\mathcal H}}
\newcommand{\GG}{\ensuremath{\mathcal G}}
\newcommand{\sri}{\ensuremath{\operatorname{sri}}}
\newcommand{\RR}{\ensuremath{\mathbb R}}
\newcommand{\RPP}{\ensuremath{\,\left]0,+\infty\right[}}
\newcommand{\RX}{\ensuremath{\,\left]-\infty,+\infty\right]}}
\newcommand{\NN}{\ensuremath{\mathbb N}}
\newcommand{\dom}{\ensuremath{\operatorname{dom}}}
\newcommand{\gr}{\ensuremath{\operatorname{gra}}}
\newcommand{\prox}{\ensuremath{\operatorname{prox}}}
\newcommand{\cart}{\ensuremath{\mbox{\huge{$\times$}}}}
\newcommand{\Argmin}{\ensuremath{\operatorname{Argmin}}}
\newcommand{\zer}{\ensuremath{\operatorname{zer}}}
\newcommand{\Fix}{\ensuremath{\operatorname{Fix}}}
\newcommand{\Id}{\ensuremath{\operatorname{Id}}}
\newcommand{\weakly}{\ensuremath{\rightharpoonup}}
\newcommand{\pinf}{\ensuremath{+\infty}}
\newtheorem{theorem}{Theorem}[section]
\newtheorem{corollary}[theorem]{Corollary}
\newtheorem{proposition}[theorem]{Proposition}
\theoremstyle{plain}{\theorembodyfont{\rmfamily}
}
\theoremstyle{plain}{\theorembodyfont{\rmfamily}
}
\theoremstyle{plain}{\theorembodyfont{\rmfamily}
}
\theoremstyle{plain}{\theorembodyfont{\rmfamily}
}
\theoremstyle{plain}{\theorembodyfont{\rmfamily}
\newtheorem{problem}[theorem]{Problem}}
\theoremstyle{plain}{\theorembodyfont{\rmfamily}
\newtheorem{remark}[theorem]{Remark}}
\theoremstyle{plain}{\theorembodyfont{\rmfamily}
}
\numberwithin{equation}{section}
\begin{document}

\title{\sffamily\huge Forward-Douglas-Rachford splitting and
forward-partial inverse method for solving monotone inclusions 
\footnote{Contact author: 
L. M. Brice\~{n}o-Arias, {\ttfamily luis.briceno@usm.cl},
phone: +56 2 432 6662.
This work was supported by CONICYT under grant FONDECYT 3120054,
by ``Programa de financiamiento basal'' from the Center for
Mathematical Modeling, Universidad de Chile, by Anillo ACT 1106,
and by Project Math-Amsud N 13MATH01.}}

\author{Luis M. Brice\~{n}o-Arias$^1$\\[5mm]
\small $\!^1$Universidad T\'ecnica Federico Santa Mar\'ia\\
\small Departamento de Matem\'atica\\
\small Santiago, Chile\\[4mm]
}

\date{}
\maketitle

\vskip 8mm

\begin{abstract} \noindent
We provide two weakly convergent algorithms for finding a zero of the
sum of a maximally monotone operator, a cocoercive operator, and the
normal cone to a closed vector subspace of a real Hilbert space. The
methods exploit the intrinsic structure of the problem by activating
explicitly the cocoercive operator in the first step, and taking
advantage of a vector space decomposition in the second step. The
second step of the first method is a Douglas-Rachford iteration
involving the maximally monotone operator and the normal cone. In the
second method it is a proximal step involving the partial inverse of
the maximally monotone operator with respect to the vector subspace.
Connections between the proposed methods and other methods in the
literature are provided. Applications to monotone inclusions
with finitely many maximally monotone operators and
optimization problems are examined.
\end{abstract}

{\small 
\noindent
{\bfseries 2000 Mathematics Subject Classification:}
Primary 47H05;
Secondary 47J25, 65K05, 90C25.

\noindent
{\bfseries Keywords:}
composite operator,
convex optimization,
decomposition,
partial inverse,
minimization algorithm,
monotone inclusion, 
monotone operator, 
operator splitting
}

\newpage

\section{Introduction}
This paper is concerned with the numerical resolution of the following problem.
\begin{problem}
\label{prob:1}
Let $\HH$ be a real Hilbert space and let $V$ be a closed vector 
subspace of $\HH$. The normal cone to $V$ is denoted by $N_V$.
Let $A\colon\HH\to 2^{\HH}$ be a maximally
monotone operator and let $B\colon \HH\to\HH$ be a
$\beta$--cocoercive 
operator in $V$, i.e., it
satisfies
\begin{equation}
\label{e:coco}
(\forall x\in V)(\forall y\in V)\quad\scal{x-y}{Bx-By}\geq\beta\|Bx-By\|^2.
\end{equation}
The problem is to
\begin{equation}
\label{e:mean0}
\text{find}\;\;x\in\HH\quad\text{such that}\quad 
0\in Ax+Bx+N_Vx,
\end{equation}
under the assumption that the set of solutions $Z$ of \eqref{e:mean0}
is nonempty. 
\end{problem}
 
Problem~\ref{prob:1} arises in a wide range of areas such
as optimization \cite{Invp08,Spin85},
variational inequalities \cite{Lion67,Tsen90,Tsen91}, 
monotone operator theory
\cite{Ecks92,Roc76a,Lion79,Spin83}, partial differential
equations \cite{Fort83,Glow89,Lion67,Merc80,Zeid90},
economics \cite{Jofr07,Penn12}, signal and image processing
\cite{Aujo05,Aujo06,Byrn02,Cham97,Daub04,Eick92,Rudi92,Sabh98,Vese03},
evolution inclusions \cite{Aubi90,Hara81,Show97},
and traffic theory
\cite{Beck56,Bert82,Fuku96,Rock84,Shef85}, among
others.

In the particular case when $B\equiv0$, \eqref{e:mean0} becomes
\begin{equation}
\label{e:pi}
\text{find}\;\;x\in V\quad\text{such that}\quad 
(\exi y\in V^{\bot})\quad y\in Ax,
\end{equation}
where $V^{\bot}$ stands for the orthogonal complement of $V$.
Problem \eqref{e:pi} has been studied in \cite{Spin83} and it
is solved 
with the method of partial inverses.
On the other hand, when $V=\HH$, \eqref{e:mean0} reduces  to
\begin{equation}
\label{e:fb}
\text{find}\;\;x\in\HH\quad\text{such that}\quad 
0\in Ax+Bx,
\end{equation}
which can be solved by the forward-backward splitting
(see \cite{Opti04} and the references therein).
In the general case, Problem~\ref{prob:1} can be solved by several
algorithms, but any of them exploits the intrinsic structure
of the problem. The forward-backward splitting \cite{Opti04} can solve
Problem~\ref{prob:1} by an explicit computation of $B$ and an
implicit computation involving the resolvent of $A+N_V$. The
disadvantage of this method is that this resolvent is not always
easy to compute. It is preferable, hence, to activate separately $A$
and $N_V$. In \cite{Pass79} an ergodic method involving the resolvents
of each maximally monotone operator separately is proposed, and weak
convergence to a solution to Problem~\ref{prob:1} is obtained.
However, the method includes vanishing parameters which leads to
numerical instabilities and, moreover, it involves the computation of
$(\Id+\gamma B)^{-1}$ for some positive constant $\gamma$, which is
not always easy to compute explicitly. The methods proposed in
\cite{Siopt3,Joca09,Comb12,Spin83} for finding a zero of the sum of
finitely many maximally
monotone operators overcomes the problem caused by the vanishing
parameters in \cite{Pass79}, but it still needs to compute
$(\Id+\gamma B)^{-1}$. The primal-dual method proposed in
\cite{Bang12}
overcomes the disadvantages of previous algorithms by computing
explicit steps of $B$. However, the method does not take advantage of
the vector subspace involved and, as a consequence, it needs to
store several auxiliary variables at each iteration, which can be
difficult for high dimensional problems.

In this paper we propose two methods for solving Problem~\ref{prob:1}
that exploit all the intrinsic properties of the problem. 
The first algorithm computes an explicit step on $B$ followed by a
Douglas-Rachford step \cite{Lion79,Svai10} involving $A$ and $N_V$.
The second method computes an explicit step on $B$ followed by an
implicit step involving the partial inverse of $A$ with respect to
$V$. The latter method generalizes the partial inverse method
\cite{Spin83} and the forward-backward
splitting \cite{Opti04} in the particular cases \eqref{e:pi} and
\eqref{e:fb}, respectively. We also provide connections between the
proposed methods, we study some relations with other methods in the
literature, and we illustrate the flexibility of this framework via
some applications. 

The paper is organized as follows. In Section~\ref{sec:1} we provide
the notation and some preliminaries. In Section~\ref{sec:2} we provide
a new version of the Krasnosel'ski\u{\i}-Mann iteration for the
composition of averaged operators. In
Section~\ref{sec:3} the latter method is applied to
particular averaged operators for obtaining the
forward-Douglas-Rachford splitting and in Section~\ref{sec:4} the
forward-partial inverse algorithm is proposed. We also
provide connections with other
algorithms in the literature. Finally, in Section~\ref{sec:5} we
examine an application for finding a zero of a sum of $m$ maximally
monotone operators and a cocoercive operator and an application to
optimization problems.

\section{Notation and preliminaries}
\label{sec:1}
Throughout this paper, $\HH$ is a real Hilbert space with 
scalar product denoted by $\scal{\cdot}{\cdot}$ and  
associated norm $\|\cdot\|$. The symbols $\weakly$ and $\to$
denote, respectively, weak and strong convergence and $\Id$ denotes
the identity operator. The indicator function of a subset $C$ of $\HH$
is
\begin{equation}
\label{e:iota}
\iota_C\colon x\mapsto
\begin{cases}
0,&\text{if}\;\;x\in C;\\
\pinf,&\text{if}\;\;x\notin C,
\end{cases}
\end{equation}
if $C$ is nonempty, closed, and convex, the projection of $x$ onto
$C$, denoted by $P_Cx$, is the unique point in $\Argmin_{y\in
C}\|x-y\|$, and the normal cone to $C$ is the maximally monotone
operator
\begin{equation}
\label{e:normalcone}
N_C\colon\HH\to 2^{\HH}\colon x\mapsto
\begin{cases}
\menge{u\in\HH}{(\forall y\in C)\:\:\scal{y-x}{u}\leq0},\quad
&\text{if }x\in C;\\
\emp,&\text{otherwise}.
\end{cases}
\end{equation}

An operator $R\colon\HH\to\HH$ is nonexpansive if
\begin{equation}
(\forall x\in\HH)(\forall y\in\HH)\quad\|Rx-Ry\|\leq\|x-y\|  
\end{equation}
and $\Fix R=\menge{x\in\HH}{Rx=x}$ is the set of fixed points of $R$.
An operator $T\colon\HH\to\HH$ is $\alpha$--averaged for some
$\alpha\in]0,1[$ if 
\begin{equation}
\label{e:averaged}
T=(1-\alpha)\Id+\alpha R 
\end{equation}
for some nonexpansive operator $R$, or, equivalently,
\begin{equation}
\label{e:averaged2}
(\forall x\in\HH)(\forall
y\in\HH)\quad\|Tx-Ty\|^2\leq\|x-y\|^2-\frac{1-\alpha}{\alpha}
\|(\Id-T)x-(\Id-T)y\|^2,
\end{equation}
or
\begin{equation}
\label{e:averaged3}
(\forall x\in\HH)(\forall
y\in\HH)\quad2(1-\alpha)\scal{x-y}{Tx-Ty}\geq\|Tx-Ty\|^2+(1-2\alpha)
\|x-y\|^2. 
\end{equation}
On the other hand, $T$ is $\beta$--cocoercive for
some $\beta\in\RPP$ if
\begin{equation}
\label{e:coco3}
(\forall x\in\HH)(\forall
y\in\HH)\quad\scal{x-y}{Tx-Ty}\geq\beta\|Tx-Ty\|^2.
\end{equation}
We say that $T$ is firmly nonexpansive if $T$ is $1/2$--averaged, or
equivalently, if $T$ is $1$--cocoercive.

We denote by $\gr A=\menge{(x,u)\in\HH\times\HH}{u\in Ax}$ the graph
of a set-valued operator $A\colon\HH\to 2^{\HH}$, by
$\dom A=\menge{x\in\HH}{Ax\neq\emp}$ its domain, by
$\zer A=\menge{x\in\HH}{0\in Ax}$ its set of zeros, and by
$J_A=(\Id+A)^{-1}$ its resolvent. If $A$ is monotone, then $J_A$ is 
single-valued and nonexpansive and, furthermore, if $A$ is maximally
monotone, then $\dom J_A=\HH$. Let $A\colon\HH\to 2^{\HH}$ be
maximally monotone. The reflection operator of $A$ is $R_A=2J_A-\Id$,
which is nonexpansive. The partial inverse of $A$ with respect to
a vector subspace $V$ of $\HH$, denoted by $A_V$, is defined by 
\begin{equation}
\label{e:partialinv}
(\forall (x,y)\in\HH^2)\quad y\in A_Vx\quad\Leftrightarrow\quad 
(P_Vy+P_{V^{\bot}}x)\in A(P_Vx+P_{V^{\bot}}y).
\end{equation}
For complements and further background 
on monotone operator theory, see 
\cite{Aubi90,Livre1,Spin83,Zali02,Zeid90}.

\section{Krasnosel'ski\u{\i}--Mann iterations for the composition of
averaged operators}
\label{sec:2}
The following result will be useful for obtaining the convergence of
the first method. It 
provides the weak convergence of the iterates generated by the
Krasnosel'ski\u{\i}--Mann iteration \cite{Opti04,Kras55,Mann53}
applied to the composition of finitely many averaged operators to a
common fixed
point. In \cite[Corollary~5.15]{Livre1} a similar method is proposed
with guaranteed convergence, but without including errors in the
computation of the operators involved. On the other hand, in
\cite{Opti04} another algorithm involving inexactitudes in the
computation of the averaged operators is studied in the case when
such operators may vary at each iteration. However, the relaxation
parameters in this case are forced to be in $\left]0,1\right]$. We
propose a new
method which includes summable errors in the computation of the
averaged operators and allows for a larger choice for the relaxation
parameters. First, for every strictly positive integer $i$ and 
a family of averaged operators $(T_j)_{1\leq j\leq m}$, let us define
\begin{equation}
\overset{m}{\underset{j=i}{\Pi}}T_j=
\begin{cases}
T_i\circ T_{i+1}\circ\cdots\circ T_m,\quad&\text{if }i\leq m;\\
\Id,&\text{otherwise}.
\end{cases}
\end{equation}

\begin{proposition}
\label{p:0}
Let $m\geq 1$, for every $i\in\{1,\ldots,m\}$, let
$\alpha_i\in\zeroun$, let $T_i$ be an $\alpha_i$-averaged
operator, and let $(e_{i,n})_{n\in\NN}$ be a sequence in 
$\HH$. In addition, set
\begin{equation}
\label{e:defalpha}
\alpha=\frac{m\,\max\{\alpha_{1},\ldots,\alpha_m\}}{1+(m-1)\max\{
\alpha_ { 1},\ldots,\alpha_m\}},
\end{equation}
let $(\lambda_n)_{n\in\NN}$ be a
sequence in 
$\left]0,1/\alpha\right[$,
suppose that $\Fix(T_{1}\circ\cdots\circ
T_{m})\neq\emp$, and suppose that
\begin{equation}
\label{e:condKM}
\sum_{n\in\NN}\lambda_n(1-\alpha\lambda_n)=\pinf\quad\text{and}\quad
(\forall
i\in\{1,\ldots,m\})\quad\sum_{n\in\NN}\lambda_n\|e_{i,n}\|<\pinf.
\end{equation}
Moreover, let $z_0\in\HH$ and set
\begin{equation}
\label{e:itrKM2}
(\forall n\in\NN)\quad
z_{n+1}=z_n+\lambda_n\Big(T_{1}\big(T_{2}(\cdots
T_{m-1}(T_{m}z_n+e_{m,n})+e_{m-1,n}\cdots)+e_{2,n}\big)+e_{1,n}
-z_n\Big). 
\end{equation}
Then the following hold for some
$\overline{z}\in\Fix(T_{1}\circ\cdots\circ
T_{m})$.
\begin{enumerate}
\item\label{p:0i} $z_n\weakly\overline{z}$.
\item\label{p:0ii}
$\displaystyle{\sum_{n\in\NN}
\lambda_n(1-\alpha\lambda_n)\Big\|\overset { m }
{ \underset { j=1 } { \Pi
} } T_j z_n-z_n\Big\|^2<\pinf}$.
\item\label{p:0iii-}$\overset{m}
{ \underset { j=1 } { \Pi
} } T_j z_n-z_n\to0$.
\item\label{p:0iii--}$z_{n+1}-z_n\to0$.
\item\label{p:0iii} 
$\displaystyle{\max_{1\leq i\leq m}\sum_{n\in\NN}\lambda_n
\Big\|(\Id-T_i)\overset{m}{\underset{j=i+1}{\Pi } }
T_jz_n-(\Id-T_i)\overset{m}{\underset{j=i+1}{\Pi}}
T_j\overline{z}\Big\|^2<\pinf}$.
\end{enumerate}
\end{proposition}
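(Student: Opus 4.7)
\emph{Proof plan.} The strategy is to recognize \eqref{e:itrKM2} as an inexact Krasnosel'ski\u{\i}--Mann iteration for the single operator $T := T_1 \circ T_2 \circ \cdots \circ T_m$, and then to extract the componentwise information in \ref{p:0iii} by unfolding the averaged inequality at each factor. The starting point is the classical fact that the composition of finitely many $\alpha_i$-averaged operators is $\alpha$-averaged with $\alpha$ given precisely by \eqref{e:defalpha}. Introducing the effective error
\[
\widetilde{e}_n := T_1\bigl(T_2(\cdots T_{m-1}(T_m z_n + e_{m,n}) + e_{m-1,n}\cdots) + e_{2,n}\bigr) + e_{1,n} - T z_n,
\]
the recursion \eqref{e:itrKM2} reads $z_{n+1} = z_n + \lambda_n(T z_n - z_n) + \lambda_n \widetilde{e}_n$, and since each $T_i$ is nonexpansive, a routine induction yields $\|\widetilde{e}_n\| \leq \sum_{i=1}^m \|e_{i,n}\|$, whence $\sum_n \lambda_n \|\widetilde{e}_n\| < \pinf$ by \eqref{e:condKM}.

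Expanding $\|z_{n+1} - \overline{z}\|^2$ by the standard convexity identity and applying \eqref{e:averaged2} to the $\alpha$-averaged operator $T$ at the pair $(z_n, \overline{z})$ produces a quasi-Fej\'er inequality of the form
\[
\|z_{n+1} - \overline{z}\|^2 \leq \|z_n - \overline{z}\|^2 - \frac{\lambda_n(1-\alpha\lambda_n)}{\alpha}\|T z_n - z_n\|^2 + \lambda_n \rho_n,
\]
where $\rho_n$ depends linearly on $\|\widetilde{e}_n\|$ times quantities controlled by the (automatic) boundedness of $(z_n)$, so that $\sum_n \lambda_n \rho_n < \pinf$. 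This gives boundedness of $(z_n)$ and \ref{p:0ii}. The fact that the residual $(\|T z_n - z_n\|)$ is essentially monotone, which is standard for relaxations of nonexpansive maps, combines with \ref{p:0ii} and $\sum_n \lambda_n(1-\alpha\lambda_n) = \pinf$ to give \ref{p:0iii-}; statement \ref{p:0iii--} is then immediate from $z_{n+1} - z_n = \lambda_n(T z_n - z_n) + \lambda_n \widetilde{e}_n$ and the boundedness of $(\lambda_n)$. For the weak convergence \ref{p:0i}, I would invoke Opial's lemma: $(z_n)$ is quasi-Fej\'er monotone with respect to $\Fix T$, and the demiclosedness of $\Id - T$ at $0$ (valid since $T$ is averaged) forces every weak cluster point to lie in $\Fix T = \Fix(T_1 \circ \cdots \circ T_m)$, whence a unique weak limit.

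The main obstacle is \ref{p:0iii}, which requires a finer bound than the single-operator averaged inequality used above. Setting $y_{j,n} := T_j \circ \cdots \circ T_m z_n$ and $\overline{y}_j := T_j \circ \cdots \circ T_m \overline{z}$, with the conventions $y_{m+1,n} = z_n$, $\overline{y}_{m+1} = \overline{z}$, and $\overline{y}_1 = \overline{z}$, and applying \eqref{e:averaged2} for $T_i$ at $(y_{i+1,n}, \overline{y}_{i+1})$ for $i = 1, \ldots, m$, the resulting $m$ inequalities telescope to
\[
\|T z_n - \overline{z}\|^2 \leq \|z_n - \overline{z}\|^2 - \sum_{i=1}^m \frac{1-\alpha_i}{\alpha_i}\|(\Id - T_i) y_{i+1,n} - (\Id - T_i) \overline{y}_{i+1}\|^2.
\]
Substituting this sharper bound in place of the $T$-averaged inequality in the Fej\'er computation of the previous paragraph produces
\[
\|z_{n+1} - \overline{z}\|^2 \leq \|z_n - \overline{z}\|^2 - \lambda_n \sum_{i=1}^m \frac{1-\alpha_i}{\alpha_i}\|(\Id - T_i) y_{i+1,n} - (\Id - T_i) \overline{y}_{i+1}\|^2 + \lambda_n \rho_n,
\]
and summing over $n$, absorbing the error term by summability of $(\lambda_n \rho_n)$ and using boundedness of $(\|z_n - \overline{z}\|^2)$, yields the required summability for each $i$, which is \ref{p:0iii}.
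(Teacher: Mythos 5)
Your proposal is correct and follows essentially the same route as the paper: aggregate the errors into a single summable perturbation of the composition $T=T_1\circ\cdots\circ T_m$ (which is $\alpha$-averaged with the stated $\alpha$), derive the quasi-Fej\'er inequality from \eqref{e:averaged2}, and obtain \ref{p:0iii} by telescoping the individual averaged inequalities inside that same computation. The only cosmetic difference is in \ref{p:0i}, where you invoke Opial's lemma directly while the paper first rescales $T$ to a nonexpansive operator $R=(1-1/\alpha)\Id+(1/\alpha)T$ and cites the known inexact Krasnosel'ski\u{\i}--Mann convergence result; the underlying argument is the same.
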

\begin{proof}
\ref{p:0i}:
First note that \eqref{e:itrKM2} can be written equivalently as 
\begin{equation}
\label{e:itrKM2aux1}
(\forall n\in\NN)\quad
z_{n+1}=z_n+\lambda_n(Tz_n+e_n-z_n),
\end{equation}
where 
\begin{equation}
\label{e:defTaux}
\begin{cases}
T=T_1\circ T_2\circ\cdots\circ T_m=\overset{m}{\underset{j=1}{\Pi}}T_j
z_n\\
e_n=T_{1}\big(T_{2}(\cdots
T_{m-1}(T_{m}z_n+e_{m,n})+e_{m-1,n}\cdots)+e_{2,n}\big)+e_{1,n}-
Tz_n.
\end{cases} 
\end{equation}
It follows from \cite[Lemma~2.2(iii)]{Opti04} that $T$ is
$\alpha$-averaged with $\alpha$ defined in \eqref{e:defalpha}, and,
using the nonexpansivity of $(T_i)_{1\leq
i\leq m}$, we obtain, for every $n\in\NN$,
\begin{align}
\|e_n\|&\leq\|T_{1}\big(T_{2}(\cdots
T_{m-1}(T_{m}z_n+e_{m,n})+e_{m-1,n}\cdots)+e_{2,n}\big)-T_{1}
\big(T_{2}(\cdots
T_{m-1}(T_{m}z_n)\cdots)\big)\|+\|e_{1,n}\|\nonumber\\
&\leq\|T_{2}\big(T_3(\cdots
T_{m-1}(T_{m}z_n+e_{m,n})+e_{m-1,n}\cdots)+e_{3,n}\big)-T_{2}
\big(T_3(\cdots
T_{m-1}(T_{m}z_n)\cdots)\big)\|\nonumber\\
&\hspace{13cm}+\|e_{2,n}\|+\|e_{1,n}\|\nonumber\\
&\leq \nonumber\\
&\hspace{.2cm}\vdots\nonumber\\
&\leq
\|T_{m-1}(T_{m}z_n+e_{m,n})-T_{m-1}(T_{m}z_n)\|+\|e_{m-1,n}
\|+\cdots+\|e_{2,n}\|+\|e_{1,n}\|\nonumber\\
&\leq \sum_{i=1}^m\|e_{i,n}\|.
\end{align}
Hence, it follows from \eqref{e:condKM} that
\begin{equation}
\label{e:auxconvf2}
\sum_{n\in\NN}\lambda_n\|e_n\|=\sum_{n\in\NN}
\lambda_n\sum_{i=1}^m\|e_{i,n}\|=\sum_{i=1}^m\sum_{n\in\NN}
\lambda_n\|e_{i,n}\|<\pinf.
\end{equation}
Now, set $R=(1-1/\alpha)\Id+(1/\alpha)T$ and, for every $n\in\NN$,
set $\mu_n=\alpha\lambda_n$. Then it follows from \eqref{e:averaged}
that $R$ is a nonexpansive operator, $\Fix R=\Fix T$, and
\eqref{e:itrKM2aux1} is equivalent to  
\begin{equation}
\label{e:itrKM}
(\forall n\in\NN)\quad z_{n+1}=z_n+\mu_n(Rz_n+c_n-z_n), 
\end{equation}
where $c_n=e_n/\alpha$. Since $(\mu_n)_{n\in\NN}$ is a sequence in
$\zeroun$ and \eqref{e:condKM} and \eqref{e:auxconvf2} yields
$\sum_{n\in\NN}\mu_n(1-\mu_n)=\pinf$ and
$\sum_{n\in\NN}\mu_n\|c_n\|<\pinf$, the result follows
from \cite[Lemma~5.1]{Opti04}. 

\ref{p:0ii}:
Fix $n\in\NN$. It
follows from \eqref{e:itrKM2aux1}, Cauchy-Schwartz inequality, and 
\cite[Lemma~2.13(ii)]{Livre1} that
\begin{align}
\label{e:auxconvf}
\|z_{n+1}-\overline{z}\|^2&=\|(1-\lambda_n)(z_n-\overline{z}
)+\lambda_n(Tz_n-T\overline{z}+e_n)\|^2\nonumber\\
&\leq
\|(1-\lambda_n)(z_n-\overline{z})
+\lambda_n(Tz_n-T\overline{z})\|^2+\varepsilon_n\nonumber\\
&=(1-\lambda_n)\|z_n-\overline{z}\|^2+\lambda_n\|Tz_n-T\overline{z}
\|^2-\lambda_n(1-\lambda_n)\|Tz_n-z_n\|^2+\varepsilon_n,
\end{align}
where,
\begin{equation}
(\forall k\in\NN)\quad
\varepsilon_k=\lambda_k^2\|e_k\|^2+2\lambda_k\|(1 -\lambda_k) 
(z_k -\overline{z})+\lambda_k(Tz_ k-T\overline{z})\|\|e_k\|. 
\end{equation}
Note that the convexity of $\|\cdot\|$, 
the nonexpansivity of $T$, and
\ref{p:0i} yield
\begin{align}
\sum_{k\in\NN}\varepsilon_k&=
\sum_{k\in\NN}\lambda_k^2\|e_k\|^2+2\sum_{k\in\NN}
\lambda_k\|(1-\lambda_k)(z_k-
\overline{z})+\lambda_k(Tz_
k-T\overline{z})\|\|e_k\|\nonumber\\
&\leq\Big(\sum_{k\in\NN}\lambda_k\|e_k\|\Big)^2+
2\sum_{k\in\NN}\lambda_k\big((1-\lambda_k)\|z_k-
\overline{z}\|+\lambda_k\|Tz_k-T\overline{z}\|\big)\|e_k\|\nonumber\\
&\leq\Big(\sum_{k\in\NN}\lambda_k\|e_k\|\Big)^2+
2\Big(\sup_{k\in\NN}\|z_k-\overline{z}\|\Big)\sum_{k\in\NN}
\lambda_k\|e_k\|<\pinf.
\end{align}
On one hand, since $T$ is $\alpha$-averaged, it follows from
\eqref{e:auxconvf} and \eqref{e:averaged2} that
\begin{align}
\|z_{n+1}-\overline{z}\|^2
&\leq
(1-\lambda_n)\|z_n-\overline{z}\|^2+\lambda_n\Big(\|z_n-\overline{z}
\|^2-\frac{(1-\alpha)}{\alpha}
\|Tz_n-z_n\|^2\Big)\nonumber\\
&\hspace{8cm}
-\lambda_n(1-\lambda_n)\|Tz_n-z_n\|^2+\varepsilon_n\nonumber\\
&\leq
\|z_{n}-\overline{z}\|^2-\frac{\lambda_n(1-\alpha\lambda_n)}{
\alpha }\|Tz_n-z_n\|^2+\varepsilon_n,
\end{align}
and, hence, the result is deduced from \cite[Lemma~3.1(iii)]{Comb01}.

\ref{p:0iii-}:
It follows from \eqref{e:condKM} and \ref{p:0ii} that 
$\varliminf\|Tz_n-z_n\|=0$. Moreover, it follows from
\eqref{e:itrKM2aux1} that
\begin{align}
(\forall n\in\NN)\quad
\|Tz_{n+1}-z_{n+1}\|&\leq\|Tz_{n+1}
-Tz_n\|+(1-\lambda_n)\|Tz_n-z_n\|+\lambda_n\|e_n\|\nonumber\\
&\leq\|z_{n+1}-z_n\|+
(1-\lambda_n)\|Tz_n-z_n\|+\lambda_n\|e_n\|\nonumber\\
&\leq\|Tz_n-z_n\|+2\lambda_n\|e_n\|.
\end{align}
Hence, from \eqref{e:auxconvf2} and \cite[Lemma~3.1]{Comb01} 
we deduce that 
$(\|Tz_n-z_n\|)_{n\in\NN}$ converges, and therefore, 
$Tz_n-z_n\to0$.

\ref{p:0iii--}: From \eqref{e:itrKM2aux1}, \eqref{e:defTaux},
\ref{p:0iii-}, and \eqref{e:auxconvf2} we obtain
\begin{equation}
\|z_{n+1}-z_n\|\leq\lambda_n\|Tz_n-z_n\|+\lambda_n\|e_n\|\leq
(1/\alpha)\|Tz_n-z_n\|+\lambda_n\|e_n\|\to0.
\end{equation}

\ref{p:0iii}: Since $(T_i)_{1\leq
i\leq m}$ are averaged operators, we have from \eqref{e:defTaux}
and \eqref{e:averaged2} that
\begin{align}
\|Tz_n-T\overline{z}\|^2&\leq\Big\|\overset{m}{\underset{j=2}{\Pi}}
T_jz_n-\overset{m}{\underset{j=2}{\Pi}}T_j\overline{z}\Big\|^2
-\frac{1-\alpha_1}{\alpha_1}\Big\|(\Id-T_1)\overset{m}{\underset{j=2}{
\Pi}}T_jz_n-(\Id-T_1)\overset{m}{\underset{j=2}{\Pi}}T_j\overline{z}
\Big\|^2\nonumber\\
&\leq\Big\|\overset{m}{\underset{j=3}{\Pi}}T_jz_n-\overset{m}{
\underset{j=3}{\Pi}}T_j\overline{z}\Big\|^2-\frac{1-\alpha_2}{\alpha_2
}\Big\|(\Id-T_2)\overset{m}{\underset{j=3}{ \Pi}}
T_jz_n-(\Id-T_2)\overset{m}{\underset{j=3}{\Pi}}T_j\overline{z}
\Big\|^2\nonumber\\
&\hspace{3.981cm}-\frac{1-\alpha_1}{\alpha_1}\Big\|(\Id-T_1)\overset{m
}{\underset{j=2}{\Pi}}T_jz_n-(\Id-T_1)\overset{m}{\underset{j=2}{\Pi}}
T_j\overline{z}
\Big\|^2\nonumber\\
&\hspace{.2cm}\vdots\nonumber\\
&\leq
\|z_n-\overline{z}\|^2-\sum_{i=1}^m\frac{1-\alpha_i}{\alpha_i}
\Big\|(\Id-T_i)
\overset{m}{\underset{j=i+1}{\Pi}}
T_jz_n-(\Id-T_i)\overset{m}{\underset{j=i+1}{\Pi}}T_j\overline{z}
\Big\|^2.
\end{align}
Hence, from \eqref{e:auxconvf} we deduce
\begin{equation}
\|z_{n+1}-\overline{z}\|^2\leq
\|z_n-\overline{z}\|^2-\lambda_n\sum_{i=1}^m\frac{1-\alpha_i}{\alpha_i
}\Big\|(\Id-T_i)\overset{m
}{\underset{j=i+1}{\Pi } }
T_jz_n-(\Id-T_i)\overset{m}{\underset{j=i+1}{\Pi}}
T_j\overline{z}\Big\|^2+\varepsilon_n.
\end{equation}
Therefore, it follows from \cite[Lemma~3.1(iii)]{Comb01} that 
\begin{multline}
\sum_{i=1}^m\frac{1-\alpha_i}{\alpha_i}\sum_{n\in\NN}
\lambda_n\Big\|(\Id-T_i)\overset{m
}{\underset{j=i+1}{\Pi } }
T_jz_n-(\Id-T_i)\overset{m}{\underset{j=i+1}{\Pi}}
T_j\overline{z}\Big\|^2\\=\sum_{n\in\NN}\lambda_n\sum_{i=1}^m\frac{
1-\alpha_i}{
\alpha_i}\Big\|(\Id-T_i)\overset{m
}{\underset{j=i+1}{\Pi } }
T_jz_n-(\Id-T_i)\overset{m}{\underset{j=i+1}{\Pi}}
T_j\overline{z}\Big\|^2<\pinf,
\end{multline}
which yields the result.
\end{proof}

\begin{remark}
In the particular case when $m=1$, Proposition~\ref{p:0} provides the
weak convergence of the iterates generated
by the classical Krasnosel'ski\u{\i}-Mann iteration
\cite{Opti04,Kras55,Mann53}
in the case of averaged operators. This result is interesting in this
own right since it generalizes \cite[Proposition~5.15]{Livre1} by
considering errors on the computation of the involved operator and
provides a larger choice of relaxation parameters than in the
nonexpansive case (see, e.g.,\cite{Opti04,Kras55,Mann53}).
\end{remark}

\section{Forward-Douglas-Rachford splitting}
\label{sec:3}
In this section we provide the first method for solving
Problem~\ref{prob:1}. We provide a characterization of the
solutions to Problem~\ref{prob:1}, then the algorithm is proposed and
its weak convergence to a solution to Problem~\ref{prob:1} is proved.

\subsection{Characterization}
Let us start with a characterization of the solutions to
Problem~\ref{prob:1}.

\begin{proposition}
\label{p:1}
Let $\gamma\in\left]0,2\beta\right[$ and $\HH$, $V$, $A$, $B$, and $Z$
be as in Problem~\ref{prob:1}.
Define 
\begin{equation}
\label{e:defmaxmon}
\begin{cases}
T_{\gamma}=\frac12(\Id+R_{\gamma A}\circ R_{N_V})\colon\HH\to
\HH\\
S_{\gamma}=\Id-\gamma P_V\circ B\circ P_V\colon\HH\to\HH.
\end{cases} 
\end{equation}
Then the following hold.
\begin{enumerate}
\item\label{p:1i} $T_{\gamma}$ is firmly nonexpansive.
\item\label{p:1ii} $S_{\gamma}$ is $\gamma/(2\beta)$--averaged.
\item\label{p:1iii} Let $x\in\HH$. Then $x\in Z$ 
if and only if 
\begin{equation}
x\in V\quad\text{and}\quad
(\exi y\in V^{\bot}\cap(Ax+Bx))\quad\text{such that}\quad 
x-\gamma (y-P_{V^{\bot}}Bx)\in\Fix(T_{\gamma}\circ S_{\gamma}). 
\end{equation}
\end{enumerate}
\end{proposition}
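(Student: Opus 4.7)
The proof splits naturally into three pieces, with (i) and (ii) being short applications of standard results and (iii) requiring the bulk of the bookkeeping. For (i), since $N_V$ and $\gamma A$ are maximally monotone the reflections $R_{N_V}=2P_V-\Id$ and $R_{\gamma A}=2J_{\gamma A}-\Id$ are nonexpansive, so their composition is nonexpansive and $T_\gamma$ is $1/2$-averaged, i.e., firmly nonexpansive; I would simply cite this as the standard Douglas--Rachford operator, e.g., from \cite{Livre1}. For (ii), the key step is to show that $P_V\circ B\circ P_V$ is $\beta$-cocoercive on all of $\HH$. Using self-adjointness of $P_V$ and the fact that $P_Vx,P_Vy\in V$, one has $\scal{x-y}{P_VBP_Vx-P_VBP_Vy}=\scal{P_Vx-P_Vy}{BP_Vx-BP_Vy}\geq\beta\|BP_Vx-BP_Vy\|^2\geq\beta\|P_VBP_Vx-P_VBP_Vy\|^2$, where the first inequality uses \eqref{e:coco} and the second uses $\|P_V\|\leq 1$. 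The standard fact that $\Id-\gamma C$ is $\gamma/(2\beta)$-averaged whenever $C$ is $\beta$-cocoercive and $\gamma\in\left]0,2\beta\right[$ then concludes.

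For the forward direction of (iii), given $x\in Z$ one has necessarily $x\in V$ (otherwise $N_Vx=\emp$) together with $a\in Ax$ and $v\in V^\bot$ such that $a+Bx+v=0$. The natural candidate is $y:=a+Bx=-v\in V^\bot\cap(Ax+Bx)$, and I would verify that $z:=x-\gamma(y-P_{V^\bot}Bx)$ satisfies $T_\gamma S_\gamma z=z$ through a short chain of computations. First, since $x\in V$ and $y,P_{V^\bot}Bx\in V^\bot$, one has $P_Vz=x$, whence $S_\gamma z=z-\gamma P_VBx$. Next, using $R_{N_V}=2P_V-\Id$, one finds $R_{N_V}S_\gamma z=2(x-\gamma P_VBx)-(z-\gamma P_VBx)=x+\gamma(y-Bx)=x+\gamma a$. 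Since $a\in Ax$, we have $J_{\gamma A}(x+\gamma a)=x$ and hence $R_{\gamma A}R_{N_V}S_\gamma z=x-\gamma a$. Averaging this with $S_\gamma z$ reassembles into $z$, completing the verification.

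For the reverse direction, given $x\in V$ together with $y\in V^\bot\cap(Ax+Bx)$ with the fixed-point property, write $y=a+Bx$ with $a\in Ax$; then $-y\in V^\bot=N_Vx$ and $a+Bx+(-y)=0$ immediately give $0\in Ax+Bx+N_Vx$, so $x\in Z$. (Notice that the fixed-point condition is not strictly required for this implication; it is the forward direction that carries the nontrivial content and justifies the algorithmic relevance of the equivalence.) The main obstacle is really the algebra in the forward direction: tracking the $V$- and $V^\bot$-decompositions of $z$ through the composition $R_{\gamma A}\circ R_{N_V}\circ S_\gamma$, and recognizing that the specific shift $\gamma P_{V^\bot}Bx$ in the definition of $z$ is precisely what is needed so that only $P_VBx$ propagates through the reflection step while the $V^\bot$-part combines with $-\gamma y$ to regenerate $z$ after averaging.
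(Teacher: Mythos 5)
Your proof is correct and follows essentially the same route as the paper: parts \ref{p:1i} and \ref{p:1ii} are the identical standard reductions (nonexpansiveness of the two reflections, and cocoercivity of $P_V\circ B\circ P_V$ combined with the averagedness of $\Id-\gamma C$ for a $\beta$--cocoercive $C$), and your fixed-point verification in \ref{p:1iii} carries out exactly the algebra of the paper's equivalence chain \eqref{e:carac0}. The only organizational difference is that the paper writes \ref{p:1iii} as a bidirectional chain of equivalences, which additionally shows that every fixed point of $T_{\gamma}\circ S_{\gamma}$ projects under $P_V$ onto a solution --- the form actually invoked later in Theorem~\ref{t:0} --- whereas you prove the stated equivalence with a direct verification forward and a (correctly observed) trivial converse.
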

\begin{proof}
\ref{p:1i}: Since $\gamma A$ is maximally monotone $J_{\gamma A}$
is firmly nonexpansive and $R_{\gamma A}=2J_{\gamma A}-\Id$ is
nonexpansive. An analogous argument yields the
nonexpansivity of $R_{N_V}=2P_V-\Id$. Hence, $R_{\gamma A}\circ
R_{N_V}$ is nonexpansive and the result 
follows from \eqref{e:averaged}.

\ref{p:1ii}: Since $V$ is a closed vector subspace of $\HH$ we have
that $P_V$ is linear and $P_V^*=P_V$. Hence, the cocoercivity of $B$
in $V$ yields, for every $(z,w)\in\HH^2$ and
$\gamma\in\left]0,2\beta\right[$,
\begin{align}
\scal{z-w}{\gamma P_V\big(B(P_Vz)\big)-
\gamma P_V\big(B(P_Vw)\big)}&=\gamma\scal{P_Vz-P_Vw}{B(P_Vz)-
B(P_Vw)}\nonumber\\
&\geq\gamma\beta\|B(P_Vz)- B(P_Vw)\|^2\nonumber\\
&\geq(\beta/\gamma)\|\gamma P_V\big(B(P_Vz)\big)-
\gamma P_V\big(B(P_Vw)\big)\|^2.
\end{align}
Since $\gamma\in\left]0,2\beta\right[$ the result follows
from \cite[Lemma~2.3]{Opti04}.

\ref{p:1iii}: Let $x\in\HH$ be a solution to Problem~\ref{prob:1}. 
We have $x\in V$ and there exists $y\in V^{\bot}=N_Vx$ such that 
$y\in Ax+Bx$. Set
$z=x-\gamma (y-P_{V^{\bot}}Bx)$. Note that
$R_{N_V}z=2P_Vz-z=x+\gamma (y-P_{V^{\bot}}Bx)$ and $P_Vz=x$. Hence,
since $B$ is single valued and, for every $w\in V$, $R_Vw=w$, it
follows from the linearity of $P_V$
that
\begin{equation}
\label{e:carac0-}
x+\gamma y-\gamma Bx=x+\gamma(y-P_{V^{\bot}}Bx)-\gamma
P_VBx=R_{N_V}z-\gamma
P_VBP_Vz=R_{N_V}(z-\gamma
P_VBP_Vz),
\end{equation}
and, therefore, 
\begin{align}
\label{e:carac0}
y\in Ax+Bx\quad&\Leftrightarrow\quad
x+\gamma y-\gamma Bx\in x+\gamma Ax\nonumber\\
&\Leftrightarrow\quad
x=J_{\gamma A}(x+\gamma y-\gamma Bx)\nonumber\\
&\Leftrightarrow\quad
x=J_{\gamma A}\big(R_{N_V}(z-\gamma
P_VBP_Vz)\big)\nonumber\\
&\Leftrightarrow\quad
x=\frac12\Big(2J_{\gamma A}\big(R_{N_V}(z-\gamma
P_VBP_Vz)\big)-R_{N_V}(z-\gamma
P_VBP_Vz)+x+\gamma y-\gamma Bx\Big)\nonumber\\
&\Leftrightarrow\quad
x=\frac12\Big(R_{\gamma A}\big(R_{N_V}(z-\gamma
P_VBP_Vz)\big)+x+\gamma y-\gamma Bx\Big)\nonumber\\
&\Leftrightarrow\quad
x=\frac12\Big(R_{\gamma A}\big(R_{N_V}(z-\gamma
P_VBP_Vz)\big)+z-\gamma
P_VBP_Vz\Big)+\gamma (y-P_{V^{\bot}}Bx)\nonumber\\
&\Leftrightarrow\quad
z=T_{\gamma}\circ S_{\gamma}z,
\end{align}
which yields the result.
\end{proof}

\subsection{Algorithm and convergence}
In the following result we propose the algorithm and we prove its
convergence to a solution to
Problem~\ref{prob:1}. The method is inspired from the characterization
provided in Proposition~\ref{p:1} and Proposition~\ref{p:0}.
\begin{theorem}
\label{t:0}
Let $\HH$, $V$, $A$, $B$, and $Z$ be as in Problem~\ref{prob:1},
let $\gamma\in\left]0,2\beta\right[$, let
$\alpha=\max\{2/3,2\gamma/(\gamma+2\beta)\}$,
let $(\lambda_n)_{n\in\NN}$ be a sequence in
$\left]0,1/\alpha\right[$,
let $(a_n)_{n\in\NN}$ and $(b_n)_{n\in\NN}$ be sequences in $\HH$,
and suppose that
\begin{equation}
\label{e:conderro}
\sum_{n\in\NN}\lambda_n(1-\alpha\lambda_n)=\pinf\quad\text{and}\quad
\sum_{n\in\NN}\lambda_n(\|a_n\|+\|b_n\|)<\pinf.
\end{equation}
Moreover, let $z_0\in\HH$ and set
\begin{align}
\label{e:algo0}
(\forall n\in\NN)\quad
&\left 
\lfloor 
\begin{array}{l}
x_n=P_Vz_n\\
y_n=(x_n-z_n)/\gamma\\
s_n=x_n-\gamma P_V\big(Bx_n+a_n\big)+\gamma y_n\\
p_n=J_{\gamma A}s_n+b_n\\
z_{n+1}=z_n+\lambda_n(p_n-x_n).
\end{array}
\right.
\end{align}
Then the sequences $(x_n)_{n\in\NN}$ and $(y_n)_{n\in\NN}$ are in 
$V$ and $V^{\bot}$, respectively, and the following hold for 
some $\overline{x}\in Z$ and some $\overline{y}\in
V^{\bot}\cap\big(A\overline{x}+P_VB\overline{x}\big)$.
\begin{enumerate}
\item\label{t:0i} $x_n\weakly \overline{x}\,$  and  $\,y_n\weakly
\overline{y}$.
\item\label{t:0ii} $x_{n+1}-x_n\to 0\,$  and  $\,y_{n+1}-y_n\to 0$.
\item\label{t:0iii}
$\sum_{n\in\NN}\lambda_n\|P_V(Bx_n-B\overline{x})\|^2<\pinf$.
\end{enumerate}
\end{theorem}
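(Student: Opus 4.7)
The plan is to recognise the recursion \eqref{e:algo0} as an instance of the inexact Krasnosel'ski\u{\i}--Mann iteration \eqref{e:itrKM2} for the composition $T_\gamma\circ S_\gamma$ from Proposition~\ref{p:1}, with $m=2$, $T_1=T_\gamma$, and $T_2=S_\gamma$, and then to combine Proposition~\ref{p:0} with the characterisation in Proposition~\ref{p:1}\ref{p:1iii}. First I would record the book-keeping identities. Since $x_n=P_Vz_n$ and $\gamma y_n=x_n-z_n=-P_{V^\bot}z_n$, the inclusions $x_n\in V$ and $y_n\in V^\bot$ are immediate. Using the linearity of $R_{N_V}=2P_V-\Id$, a one-line computation gives
\begin{equation*}
T_\gamma u=P_{V^\bot}u+J_{\gamma A}(R_{N_V}u),
\end{equation*}
whence, setting $w_n=S_\gamma z_n=z_n-\gamma P_VBx_n$, one obtains
\begin{equation*}
T_\gamma(S_\gamma z_n)-z_n=J_{\gamma A}(R_{N_V}w_n)-x_n.
\end{equation*}
Comparing with \eqref{e:algo0}, I would observe that $s_n=R_{N_V}w_n-\gamma P_Va_n$ and $p_n-x_n=(T_\gamma\circ S_\gamma z_n)-z_n+e_{1,n}$, where
\begin{equation*}
e_{1,n}=J_{\gamma A}s_n-J_{\gamma A}(R_{N_V}w_n)+b_n
\end{equation*}
satisfies $\|e_{1,n}\|\le\gamma\|a_n\|+\|b_n\|$ by nonexpansivity of $J_{\gamma A}$. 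Hence \eqref{e:algo0} is exactly \eqref{e:itrKM2} with $m=2$, $e_{2,n}\equiv0$, and $\sum_n\lambda_n\|e_{1,n}\|<\pinf$ by \eqref{e:conderro}.

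Next I would verify the hypotheses of Proposition~\ref{p:0}. By Proposition~\ref{p:1}, $T_1$ is $1/2$-averaged and $T_2$ is $\gamma/(2\beta)$-averaged, so the constant \eqref{e:defalpha} evaluates to $\max\{2/3,\,2\gamma/(\gamma+2\beta)\}$, matching $\alpha$ in the statement. Since $Z\neq\emp$, Proposition~\ref{p:1}\ref{p:1iii} guarantees $\Fix(T_\gamma\circ S_\gamma)\neq\emp$. Proposition~\ref{p:0} then yields $z_n\weakly\overline{z}$ for some $\overline{z}\in\Fix(T_\gamma\circ S_\gamma)$, together with $z_{n+1}-z_n\to0$ and $\sum_n\lambda_n\|(\Id-S_\gamma)z_n-(\Id-S_\gamma)\overline{z}\|^2<\pinf$ (from clause~\ref{p:0iii} with $i=m=2$).

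Finally I would translate these conclusions back to $(x_n,y_n)$. By Proposition~\ref{p:1}\ref{p:1iii}, $\overline{x}:=P_V\overline{z}\in Z$, and the associated $y\in V^\bot\cap(A\overline{x}+B\overline{x})$ satisfies $\overline{z}=\overline{x}-\gamma(y-P_{V^\bot}B\overline{x})$; consequently $\overline{y}:=(\overline{x}-\overline{z})/\gamma=y-P_{V^\bot}B\overline{x}$ lies in $V^\bot\cap(A\overline{x}+P_VB\overline{x})$. Weak continuity of the linear maps $P_V$ and $-P_{V^\bot}/\gamma$ then delivers \ref{t:0i} from $x_n=P_Vz_n$ and $\gamma y_n=-P_{V^\bot}z_n$; applying the same linear maps to $z_{n+1}-z_n\to0$ gives \ref{t:0ii}; and \ref{t:0iii} follows from the identity $(\Id-S_\gamma)z_n-(\Id-S_\gamma)\overline{z}=\gamma P_V(Bx_n-B\overline{x})$.

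The main obstacle is the clean identification of the error term $e_{1,n}$ and the check that the averaging constants produce precisely $\alpha=\max\{2/3,\,2\gamma/(\gamma+2\beta)\}$; once those are in place, Propositions~\ref{p:0} and~\ref{p:1} are assembled mechanically.
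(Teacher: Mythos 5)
Your proposal is correct and follows essentially the same route as the paper: rewrite \eqref{e:algo0} as the inexact Krasnosel'ski\u{\i}--Mann iteration \eqref{e:itrKM2} for $T_\gamma\circ S_\gamma$ with $m=2$, then combine Proposition~\ref{p:0} with the characterization in Proposition~\ref{p:1}\ref{p:1iii} exactly as you describe. The only (immaterial) difference is that you absorb the perturbation $a_n$ into a single outer error term bounded via the nonexpansivity of $J_{\gamma A}$, whereas the paper keeps it as the inner error $e_{2,n}=-\gamma P_Va_n$ attached to $S_\gamma$; both identifications are exact and lead to the same application of Proposition~\ref{p:0}.
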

\begin{proof}
First note that \eqref{e:algo0} can be written equivalently as
\begin{align}
\label{e:aux1}
(\forall n\in\NN)\quad 
&\left 
\lfloor 
\begin{array}{l}
x_n=P_Vz_n\\
y_n=-P_{V^{\bot}}z_n/\gamma\\
z_{n+1}=z_n+\lambda_n\big(T_{\gamma}(S_{\gamma}z_n+c_{n})+b_n
-z_n\big),
\end{array}
\right.
\end{align}
where $T_{\gamma}$ and $S_{\gamma}$ are defined in
\eqref{e:defmaxmon} and, for every $n\in\NN$,  
$c_{n}=-\gamma P_Va_n$. We have from \eqref{e:conderro} that
\begin{equation}
\sum_{n\in\NN}\lambda_n(\|b_n\|+\|c_n\|)\leq 
\sum_{n\in\NN}\lambda_n(\|b_n\|+\gamma\|a_n\|)\leq\max\{1,\gamma\}
\sum_{n\in\NN}\lambda_n(\|a_n\|+\|b_n\|)<\pinf.
\end{equation}
Moreover, it follows from
Proposition~\ref{p:1}\ref{p:1i}\&\ref{p:1ii} that $T_{\gamma}$
is $1/2$-averaged and $S_{\gamma}$ is $\gamma/(2\beta)$-averaged.
Altogether, by setting $m=2$, $T_1=T_{\gamma}$,
$T_2=S_{\gamma}$, $\alpha_1=1/2$, $\alpha_2=\gamma/(2\beta)$,
$e_{1,n}=b_n$, $e_{2,n}=c_n$, and noting that 
\begin{equation}
\frac{2\max\{1/2,\gamma/(2\beta)\}}{1+\max\{1/2,\gamma/(2\beta)\}}
=\max\{2/3,2\gamma/(\gamma+2\beta)\}=\alpha, 
\end{equation}
it follows from Proposition~\ref{p:0} that there exists
$\overline{z}\in\Fix(T_{\gamma}\circ S_{\gamma})$ such that
\begin{eqnarray}
\label{e:conc1}
& &z_n\weakly\overline{z}\\
\label{e:conc2}
& &z_{n+1}-z_n\to0 \\
\label{e:conc3}
& & \sum_{n\in\NN}\lambda_n\|(\Id-S_{\gamma})z_n-(\Id-S_{\gamma}
)\overline {z}\|^2<\pinf.
\end{eqnarray}
Now set
$\overline{x}=P_V\overline{z}$ and
$\overline{y}=-P_{V^{\bot}}\overline{z}/\gamma$. It follows from
Proposition~\ref{p:1}\ref{p:1iii} that $\overline{x}$ is solution
to Problem~\ref{prob:1} and $\overline{y}=y-P_{V^{\bot}}B\overline{x}$
for some $y\in V^{\bot}\cap(A\overline{x}+B\overline{x})$. Then,
$\overline{y}\in V^{\bot}\cap(A\overline{x}+P_VB\overline{x})$.

\ref{t:0i}: It is clear from \eqref{e:aux1} and
\eqref{e:conc1} that 
$x_n\weakly\overline{x}$ and 
$y_n\weakly\overline{y}$.

\ref{t:0ii}: It is a consequence of \eqref{e:conc2} and
\begin{equation}
(\forall n\in\NN)\quad
\|z_{n+1}-z_{n}\|^2=\|x_{n+1}-x_{n}\|^2+\gamma^2\|y_{n+1}-y_n\|^2. 
\end{equation}

\ref{t:0iii}: It follows from \eqref{e:defmaxmon} that
\begin{equation}
(\forall n\in\NN)\quad\|(\Id-S_{\gamma})z_n-(\Id-S_{\gamma}
)\overline {z}\|^2=\gamma\|P_V(Bx_n)-P_V(B\overline{x})\|^2.
\end{equation}
Hence, the result follows from \eqref{e:conc3}.
\end{proof}
\begin{remark}
Note that, if $\varliminf\lambda_n>0$, then
Theorem~\ref{t:0}\ref{t:0iii} implies
$P_V(Bx_n)\to P_V(B\overline{x})$. 
 \end{remark}

\section{Forward-partial-inverse splitting}
\label{sec:4}
We provide a second characterization of solutions to
Problem~\ref{prob:1} via the partial inverse operator introduced in
\cite{Spin83}. This characterization motivates a second algorithm,
whose convergence to a solution to Problem~\ref{prob:1} is proved.
The proposed method generalizes the partial inverse method proposed
in \cite{Spin83} and the forward-backward splitting \cite{Opti04}.
\subsection{Characterization}
\begin{proposition}
\label{p:12}
Let $\gamma\in\RPP$ and $\HH$, $A$, $B$, and $V$ be as in
Problem~\ref{prob:1}.
Define 
\begin{equation}
\label{e:defmaxmon2}
\begin{cases}
\mathcal{A}_{\gamma}=(\gamma A)_V\colon\HH\to 2^{\HH}\\
\mathcal{B}_{\gamma}=\gamma P_V\circ B\circ P_V\colon\HH\to V.
\end{cases} 
\end{equation}
Then the following hold.
\begin{enumerate}
\item\label{p:12i} $\mathcal{A}_{\gamma}$ is maximally monotone.
\item\label{p:12ii} $\mathcal{B}_{\gamma}$ is
$\beta/\gamma$--cocoercive.
\item\label{p:12iii} Let $x\in\HH$. Then $x$ is a solution to
Problem~\ref{prob:1}
if and only if 
\begin{equation}
x\in V\quad\text{and}\quad
(\exi y\in V^{\bot}\cap(Ax+Bx))\quad\text{such that}\quad 
x+\gamma
(y-P_{V^{\bot}}Bx)\in\zer(\mathcal{A}_{\gamma}+\mathcal{B}_{\gamma}). 
\end{equation}

\end{enumerate}

\end{proposition}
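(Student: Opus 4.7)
For (i), the plan is to invoke the classical fact (Spingarn \cite{Spin83}) that the partial inverse of a maximally monotone operator with respect to a closed vector subspace is again maximally monotone. Since $\gamma>0$ and $A$ is maximally monotone, $\gamma A$ is maximally monotone, and therefore so is $\mathcal{A}_\gamma=(\gamma A)_V$. This is essentially a citation, so no real work is needed.

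For (ii), I would proceed by direct computation, exactly mirroring the argument used for $S_\gamma$ in Proposition~\ref{p:1}\ref{p:1ii}. Since $V$ is a closed vector subspace, $P_V$ is linear and self-adjoint, so for every $(z,w)\in\HH^2$ the cocoercivity of $B$ in $V$ gives
\begin{equation*}
\scal{z-w}{\mathcal{B}_\gamma z-\mathcal{B}_\gamma w}
=\gamma\scal{P_Vz-P_Vw}{B(P_Vz)-B(P_Vw)}\geq\gamma\beta\|B(P_Vz)-B(P_Vw)\|^2.
\end{equation*}
Since $\|\mathcal{B}_\gamma z-\mathcal{B}_\gamma w\|=\gamma\|P_V(B(P_Vz)-B(P_Vw))\|\leq\gamma\|B(P_Vz)-B(P_Vw)\|$ by nonexpansivity of $P_V$, dividing yields
\begin{equation*}
\scal{z-w}{\mathcal{B}_\gamma z-\mathcal{B}_\gamma w}\geq(\beta/\gamma)\|\mathcal{B}_\gamma z-\mathcal{B}_\gamma w\|^2,
\end{equation*}
which is the asserted $\beta/\gamma$-cocoercivity.

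For (iii), which is the main content, I would exploit the orthogonal decomposition built into the candidate point $z=x+\gamma(y-P_{V^{\bot}}Bx)$. Because $x\in V$ and $y-P_{V^{\bot}}Bx\in V^{\bot}$, one has the clean identities $P_Vz=x$ and $P_{V^{\bot}}z=\gamma(y-P_{V^{\bot}}Bx)$, and hence $\mathcal{B}_\gamma z=\gamma P_VBx$. The task is then to show that $-\mathcal{B}_\gamma z\in\mathcal{A}_\gamma z$, i.e., that $w:=-\gamma P_VBx$ lies in $(\gamma A)_Vz$. Using the definition \eqref{e:partialinv} and $P_Vw=w$, $P_{V^{\bot}}w=0$, this amounts to checking
\begin{equation*}
P_Vw+P_{V^{\bot}}z=\gamma(y-P_VBx-P_{V^{\bot}}Bx)=\gamma(y-Bx)\in\gamma A x=\gamma A(P_Vz+P_{V^{\bot}}w),
\end{equation*}
which is equivalent to $y\in Ax+Bx$. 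This establishes the forward implication, and since every step is reversible, the converse direction follows by reading the same chain backwards: given $z\in\zer(\mathcal{A}_\gamma+\mathcal{B}_\gamma)$ with the stated decomposition, one recovers $x\in V$, $y\in V^{\bot}$, and $y\in Ax+Bx$.

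The only subtlety I anticipate is keeping the partial-inverse bookkeeping straight, in particular verifying that the decomposition of $z$ gives precisely $P_Vz=x$ and that the vector $w=-\gamma P_VBx$ sits entirely in $V$; once these are in place the algebra is routine and mirrors the verification carried out for $T_\gamma\circ S_\gamma$ in Proposition~\ref{p:1}\ref{p:1iii}.
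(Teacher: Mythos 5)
Your proposal is correct and follows essentially the same route as the paper: part (i) is the same citation to Spingarn, and part (iii) unwinds the definition \eqref{e:partialinv} at the point $z=x+\gamma(y-P_{V^\bot}Bx)$ exactly as the paper does, reducing the zero condition to $y\in Ax+Bx$. The only cosmetic difference is in (ii), where you compute the cocoercivity inequality directly (reusing the estimate already present in the proof of Proposition~\ref{p:1}\ref{p:1ii}) instead of the paper's detour through the averagedness of $S_\gamma=\Id-\mathcal{B}_\gamma$; both arguments are valid.
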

\begin{proof}
\ref{p:1i}: Since $\gamma A$ is maximally monotone, the result 
follows from \cite[Proposition~2.1]{Spin83}.
\ref{p:1ii}: It follows from \eqref{e:defmaxmon} that
$\mathcal{B}_{\gamma}=\Id-S_{\gamma}$ and from
Proposition~\ref{p:1}\ref{p:1ii} and \eqref{e:averaged} we deduce
that there exists a nonexpansive operator $R_{\gamma}\colon\HH\to\HH$
such that
$\mathcal{B}_{\gamma}=\Id-(1-\gamma/(2\beta))\Id-\gamma/(2\beta)R_{
\gamma}=(\gamma/\beta)(\Id-R_{\gamma})/2$. Hence, since
$(\Id-R_{\gamma})/2$ is firmly nonexpansive, the result follows from 
\eqref{e:coco3}.
\ref{p:1iii}:
Let $x\in\HH$ be a solution to Problem~\ref{prob:1}. 
We have $x\in V$ and there exists $y\in V^{\bot}=N_Vx$ such that 
$y\in Ax+Bx$. 
Since $B$ is single valued and $P_V$ is linear, it follows from
\eqref{e:partialinv} that
\begin{align}
\label{e:carac1}
y\in Ax+Bx\quad
&\Leftrightarrow\quad
\gamma y-\gamma Bx\in \gamma Ax\nonumber\\
&\Leftrightarrow\quad
-\gamma P_V(Bx)
\in (\gamma A)_V\big(x+\gamma (y-P_{V^{\bot}}Bx)\big)\nonumber\\
&\Leftrightarrow\quad
0\in (\gamma A)_V(x+\gamma (y-P_{V^{\bot}}Bx))+\gamma
P_V\big(B\big(P_V(x+\gamma (y-P_{V^{\bot}}Bx))\big)\big)\nonumber\\
&\Leftrightarrow\quad
x+\gamma (y-P_{V^{\bot}}Bx)\in\zer
(\mathcal{A}_{\gamma}+\mathcal{B}_{\gamma}),
\end{align}
which yields the result.
\end{proof}
\begin{remark}
Note that the characterizations provided in Proposition~\ref{p:1} y
Proposition~\ref{p:12} are related. Indeed,
Proposition~\ref{p:1}\ref{p:1iii} and
Proposition~\ref{p:12}\ref{p:1iii} yield
\begin{equation}
Z=P_V(\Fix(T_{\gamma}\circ
S_{\gamma}))=P_V(\zer(\mathcal{A}_{\gamma}+\mathcal{B}_{\gamma}))
\quad\text{and}\quad R_{N_V}(\Fix(T_{\gamma}\circ
S_{\gamma}))=\zer(\mathcal{A}_{\gamma}+\mathcal{B}_{\gamma}).
\end{equation}
\end{remark}

\subsection{Algorithm and convergence}
\begin{theorem}
\label{t:1}
Let $\HH$, $V$, $A$, $B$, and $Z$ be as in Problem~\ref{prob:1},
let $\gamma\in\RPP$, let
$\varepsilon\in\left]0,\max\{1,\beta/\gamma\}\right[$, 
let $(\delta_n)_{n\in\NN}$ be a sequence in
$\left[\varepsilon,(2\beta/\gamma)-\varepsilon\right]$, and let
$(\lambda_n)_{n\in\NN}$ be a sequence in $\left[\varepsilon,1\right]$.
Moreover, 
let $x_0\in V$, let $y_0\in V^{\bot}$, and, for every $n\in\NN$, 
consider the following routine.
\begin{align}
\text{Step}\:1.\:&
\text{\rm Find }(p_n,q_n)\in\HH^2\:\text{\rm such that }
x_n-\delta_n\gamma P_VBx_n+\gamma y_n=p_n+\gamma
q_n\nonumber\\
\label{e:auxprop}
&\text{\rm and } \frac{P_Vq_n}{\delta_n}+P_{V^{\bot}}q_n\in
A\Big(P_Vp_n+\frac{P_{V^{\bot}}p_n}{\delta_n}\Big).\\
\text{Step}\:2.\:&\text{\rm Set }
x_{n+1}=x_n+\lambda_n(P_Vp_n-x_n)\:\:\text{\rm and }
y_{n+1}=y_n+\lambda_n(P_{V^{\bot}}q_n-y_n).\: \text{\rm Go to
}\text{Step}\:1.\nonumber
\end{align}
Then, the sequences $(x_n)_{n\in\NN}$ and $(y_n)_{n\in\NN}$ are in 
$V$ and $V^{\bot}$, respectively, and the following hold for 
some $\overline{x}\in Z$ and $\overline{y}\in
V^{\bot}\cap(A\overline{x}+P_VB\overline{x})$.
\begin{enumerate}
\item\label{t:1i} $x_n\weakly \overline{x}\,$ and $\,y_n\weakly
\overline{y}$.
\item\label{t:1ii} $x_{n+1}-x_n\to 0\,$ and $\,y_{n+1}-y_n\to 0$.
\item\label{t:1iii} $P_VBx_n\to P_VB\overline{x}$.
\end{enumerate}
\end{theorem}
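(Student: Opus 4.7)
The plan is to recognize the iteration \eqref{e:auxprop} as the standard forward-backward splitting (with relaxation) applied to the pair $(\mathcal{A}_\gamma,\mathcal{B}_\gamma)$ of Proposition~\ref{p:12}, by introducing the combined variable $z_n=x_n+\gamma y_n$. Once this identification is established, weak convergence and all the stated properties follow from a standard forward-backward convergence theorem (e.g.\ \cite[Theorem~3.4 or Corollary~6.5]{Opti04}).

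First I would check by induction on $n$ that $x_n\in V$ and $y_n\in V^\bot$: this is immediate from Step~2 since $P_Vp_n\in V$ and $P_{V^\bot}q_n\in V^\bot$. Setting $z_n=x_n+\gamma y_n$ one has $P_Vz_n=x_n$ and $P_{V^\bot}z_n=\gamma y_n$, and the first equation in Step~1 rewrites as
\begin{equation*}
p_n+\gamma q_n = z_n-\delta_n\mathcal{B}_\gamma z_n,
\end{equation*}
since $\mathcal{B}_\gamma z_n=\gamma P_V B P_V z_n=\gamma P_V Bx_n$.

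The crucial step is to translate the inclusion in \eqref{e:auxprop} into a resolvent identity. Define $\zeta_n=P_Vp_n+\gamma P_{V^\bot}q_n$. Multiplying the inclusion in Step~1 by $\gamma$ gives
\begin{equation*}
\frac{\gamma P_Vq_n}{\delta_n}+\gamma P_{V^\bot}q_n\in \gamma A\!\left(P_Vp_n+\frac{P_{V^\bot}p_n}{\delta_n}\right),
\end{equation*}
which, upon applying the defining equivalence \eqref{e:partialinv} of the partial inverse $(\gamma A)_V=\mathcal{A}_\gamma$ with the matching $P_V(\cdot)=P_Vp_n$ on the inside and $P_{V^\bot}(\cdot)=\gamma P_{V^\bot}q_n$, reads
\begin{equation*}
\frac{\gamma P_Vq_n+P_{V^\bot}p_n}{\delta_n}\in\mathcal{A}_\gamma\zeta_n.
\end{equation*}
Multiplying by $\delta_n$ and adding $\zeta_n$ gives $\zeta_n+\delta_n\mathcal{A}_\gamma\zeta_n\ni p_n+\gamma q_n=z_n-\delta_n\mathcal{B}_\gamma z_n$, i.e.\
\begin{equation*}
\zeta_n=J_{\delta_n\mathcal{A}_\gamma}(z_n-\delta_n\mathcal{B}_\gamma z_n).
\end{equation*}
Step~2 then yields $z_{n+1}=z_n+\lambda_n(\zeta_n-z_n)$, which is precisely the relaxed forward-backward recursion. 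This is the main algebraic hurdle; everything else is an appeal to known theory.

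Proposition~\ref{p:12}\ref{p:12i}--\ref{p:12ii} provide maximal monotonicity of $\mathcal{A}_\gamma$ and $(\beta/\gamma)$-cocoercivity of $\mathcal{B}_\gamma$, and Proposition~\ref{p:12}\ref{p:12iii} guarantees that $\zer(\mathcal{A}_\gamma+\mathcal{B}_\gamma)\neq\emp$ since $Z\neq\emp$. The step sizes $\delta_n\in[\varepsilon,2\beta/\gamma-\varepsilon]$ and relaxation $\lambda_n\in[\varepsilon,1]$ fall in the standard admissible range. A classical forward-backward theorem therefore gives a $\overline{z}\in\zer(\mathcal{A}_\gamma+\mathcal{B}_\gamma)$ with $z_n\weakly\overline{z}$, $z_{n+1}-z_n\to 0$, and $\mathcal{B}_\gamma z_n\to\mathcal{B}_\gamma\overline{z}$.

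To conclude, set $\overline{x}=P_V\overline{z}$ and $\overline{y}=P_{V^\bot}\overline{z}/\gamma$. By Proposition~\ref{p:12}\ref{p:12iii} applied to $\overline{z}$, one obtains $\overline{x}\in Z$ and $\overline{y}\in V^\bot\cap(A\overline{x}+P_VB\overline{x})$. Part~\ref{t:1i} follows from $x_n=P_Vz_n\weakly P_V\overline{z}=\overline{x}$ and $y_n=P_{V^\bot}z_n/\gamma\weakly\overline{y}$ by continuity of $P_V,P_{V^\bot}$ for weak topology. Part~\ref{t:1ii} is immediate from
\begin{equation*}
\|z_{n+1}-z_n\|^2=\|x_{n+1}-x_n\|^2+\gamma^2\|y_{n+1}-y_n\|^2\to 0.
\end{equation*}
Part~\ref{t:1iii} follows from $\mathcal{B}_\gamma z_n\to\mathcal{B}_\gamma\overline{z}$ together with $\mathcal{B}_\gamma z_n=\gamma P_VBx_n$ and $\mathcal{B}_\gamma\overline{z}=\gamma P_VB\overline{x}$. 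The only delicate point is the partial-inverse manipulation in Step~1; once that identification is in place the rest is routine.
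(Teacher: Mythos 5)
Your proposal is correct and follows essentially the same route as the paper: both reduce the routine to the relaxed forward--backward iteration $r_{n+1}=r_n+\lambda_n\big(J_{\delta_n\mathcal{A}_\gamma}(r_n-\delta_n\mathcal{B}_\gamma r_n)-r_n\big)$ on the combined variable $r_n=x_n+\gamma y_n$ via the partial-inverse identity, and then invoke a known forward--backward convergence theorem together with Proposition~\ref{p:12}. Your direct identification of $\zeta_n=P_Vp_n+\gamma P_{V^\bot}q_n$ as the resolvent output is a slightly cleaner way to organize the same algebra that the paper carries out by back-substituting the Step~2 updates.
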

\begin{proof}
Since $x_0\in V$ and $y_0\in V^{\bot}$, \eqref{e:auxprop}
yields $(x_n)_{n\in\NN}\subset V$ and $(y_n)_{n\in\NN}\subset V^{\bot}$.
Thus, for every $n\in\NN$, it follows from \eqref{e:auxprop} and the
linearity of $P_V$ and $P_{V^{\bot}}$ that
\begin{equation}
\label{e:auxpof}
\begin{cases}
\hskip.2cmP_Vp_n+\gamma P_Vq_n&=P_V(p_n+\gamma
q_n)=P_V(x_n-\delta_n\gamma P_VBx_n+\gamma y_n)
=x_n-\delta_n\gamma P_VBx_n\\
P_{V^{\bot}}p_n+\gamma
P_{V^{\bot}}q_n\hskip-.5cm&=P_{V^{\bot}}(p_n+\gamma q_n)
=P_{V^{\bot}}(x_n-\delta_n\gamma P_VBx_n+\gamma y_n)
=\gamma y_n,
\end{cases}
\end{equation}
which yield
\begin{equation}
\label{e:aux222}
\begin{cases}
\hskip.05cmP_Vq_n&=(x_n-\delta_n\gamma
Bx_n-P_Vp_n)/\gamma=(x_n-x_{n+1}
)/(\gamma\lambda_n)-\delta_nP_VBx_n\\
P_{V^{\bot}}p_n\hskip-1cm&=\gamma(y_n-P_{V^{\bot}}q_n)=\gamma(y_n-y_{
n+1})/\lambda_n.
\end{cases}
\end{equation}
On the other hand, from \eqref{e:auxprop} we obtain 
\begin{equation}
P_Vp_n=x_n+\frac{x_{n+1}-x_n}{\lambda_n}\quad\text{and}\quad
P_{V^{\bot}}q_n=y_n+\frac{y_{n+1}-y_n}{\lambda_n}.
\end{equation}
Hence, it follows from \eqref{e:aux222} and \eqref{e:auxprop} that
\begin{equation}
\frac{(x_n-x_{n+1})}{\lambda_n\delta_n\gamma}-
P_VBx_n+y_{n}+\frac{y_{n+1}-y_n
}{\lambda_n} \in
A\Big(x_n+\frac{x_{n+1}-x_n}{\lambda_n}
+\frac{\gamma(y_n-y_{n+1})}{\lambda_n\delta_n}\Big), 
\end{equation}
or equivalently,
\begin{equation}
\frac{(x_n-x_{n+1})}{\lambda_n\delta_n}-
\gamma P_VBx_n+\gamma y_{n}+\frac{\gamma(y_{n+1}-y_n)}{\lambda_n}\in
\gamma A\Big(x_n+\frac{x_{n+1}-x_n}{\lambda_n}
+\frac{\gamma(y_n-y_{n+1})}{\lambda_n\delta_n}\Big).
\end{equation}
Thus, by using the definition of partial inverse \eqref{e:partialinv}
we obtain
\begin{equation}
\frac{(x_n-x_{n+1})}{\lambda_n\delta_n}-\gamma
P_VBx_n+\frac{\gamma(y_n-y_{n+1})}{\lambda_n\delta_n}
\in(\gamma
A)_V\bigg(x_n+\gamma
y_{n}+\frac{x_{n+1}-x_n+\gamma(y_{n+1}-y_n)}{\lambda_n}\bigg),
\end{equation}
which can be written equivalently as
\begin{multline}
x_n+\gamma y_n-\delta_n\gamma P_VBx_n-\bigg(x_n+\gamma
y_n+\frac{x_{n+1}-x_n+\gamma(y_{n+1}-y_n)}{\lambda_n}\bigg)\\
\in\delta_n(\gamma A)_V\bigg(x_n+\gamma
y_{n}+\frac{x_{n+1}-x_n+\gamma(y_{n+1}-y_n)}{\lambda_n}\bigg).
\end{multline}
Hence, we have
\begin{equation}
x_n+\gamma
y_{n}+\frac{x_{n+1}-x_n+\gamma(y_{n+1}-y_n)}{\lambda_n}=J_{
\delta_n(\gamma A)_V}(x_n+\gamma
y_n-\delta_n\gamma P_VBx_n),
\end{equation}
or equivalently,
\begin{equation}
\label{e:aux333}
x_{n+1}+\gamma y_{n+1}=x_n+\gamma y_n+\lambda_n\Big(J_{
\delta_n(\gamma A)_V}(x_n+\gamma
y_n-\delta_n\gamma P_VBx_n)-x_n+\gamma y_n\Big).
\end{equation}
If, for every $n\in\NN$, we denote $r_n=x_n+\gamma y_n$, from 
\eqref{e:aux333} and \eqref{e:defmaxmon2} we have
\begin{equation}
\label{e:FB2}
r_{n+1}=r_n+\lambda_n\big(J_{\delta_n\mathcal{A}_{\gamma}}
(r_n-\delta_n\mathcal{B}_{\gamma}r_n)-r_n\big).
\end{equation}
Since $(\delta_n)_{n\in\NN}\subset
\left[\varepsilon,2(\beta/\gamma)-\varepsilon\right]$, it follows from
Proposition~\ref{p:12}\ref{p:12i}\&\ref{p:12ii} and
\cite[Theorem~2.8]{Sico10} 
that there exists
$\overline{r}\in\zer(\mathcal{A}_{\gamma}+\mathcal{B}_{\gamma})$ such
that 
$r_n\weakly \overline{r}$, $\mathcal{B}_{\gamma}r_n\to
\mathcal{B}_{\gamma}\overline{r}$,
$r_n-r_{n+1}=\lambda_n(r_n-J_{\delta_n\mathcal{A}_{\gamma}}
(r_n-\delta_n\mathcal{B}_{\gamma}r_n))\to 0$.
Hence, by taking $\overline{x}=P_V\overline{r}$ and
$\overline{y}=P_{V^{\bot}}\overline{r}/\gamma$, 
Proposition~\ref{p:12}\ref{p:12iii} asserts that
$\overline{x}\in Z$, $\overline{y}\in
V^{\bot}\cap(A\overline{x}+P_VB\overline{x})$, and the results follow
from 
\begin{equation}
(\forall (x,y)\in\HH^2)\quad\scal{x}{y}=\scal{P_Vx}{P_Vy}+
\scal{P_{V^{\bot}}x}{P_{V^{\bot}}y}
\end{equation}
and the definition of $\mathcal{B}_{\gamma}$.
\end{proof}

\begin{remark}\
\begin{enumerate}
\item 
It is known that the forward--backward
splitting admits errors in the computations of the
operators involved \cite{Opti04}. In our algorithm these inexactitudes
have not been considered for simplicity.
\item 
In the particular case when $\gamma<2\beta$,
$\lambda_n\equiv1$, and $B\equiv0$, the
forward-partial-inverse method reduces to the partial inverse method
proposed in \cite{Spin83} for solving \eqref{e:pi}.
\end{enumerate}
\end{remark}

The sequence $(\delta_n)_{n\in\NN}$ in Theorem~\ref{t:1} can be
manipulated 
in order to accelerate the algorithm. However, as in \cite{Spin83}, 
{\em Step} 1 in Theorem~\ref{t:1} is not always easy to compute. 
The following result show us a particular case of our 
method in which {\em Step} 1 can be obtained explicitly 
when the resolvent of $A$ is computable.

\begin{corollary}
\label{c:0}
Let $\gamma\in\left]0,2\beta\right[$, let $x_0\in V$, let $y_0\in
V^{\bot}$, let $(\lambda_n)_{n\in\NN}$ be a sequence in
$\left[\varepsilon,1\right]$, and consider the following routine.
\begin{align}
\label{e:algo}
(\forall n\in\NN)\quad
&\left 
\lfloor 
\begin{array}{l}
s_n=x_n-\gamma P_V Bx_n+\gamma y_n\\
p_n=J_{\gamma A}s_n\\
y_{n+1}=y_n+(\lambda_n/\gamma)(P_Vp_n-p_n)\\
x_{n+1}=x_n+\lambda_n(P_Vp_n-x_n).
\end{array}
\right.
\end{align}
Then, the sequences $(x_n)_{n\in\NN}$ and $(y_n)_{n\in\NN}$ are in 
$V$ and $V^{\bot}$, respectively, and the following hold for 
some $\overline{x}\in Z$ and $\overline{y}\in
V^{\bot}\cap(A\overline{x}+P_VB\overline{x})$.
\begin{enumerate}
\item $x_n\weakly \overline{x}$ and $y_n\weakly \overline{y}$.
\item $x_{n+1}-x_n\to 0$ and $y_{n+1}-y_n\to 0$.
\item $P_VBx_n\to P_VB\overline{x}$
\end{enumerate}
\end{corollary}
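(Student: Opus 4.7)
The plan is to recognize Corollary~\ref{c:0} as the special case of Theorem~\ref{t:1} in which the relaxation parameter is fixed at $\delta_n\equiv 1$, and to verify that \emph{Step} 1 can then be carried out by a single application of $J_{\gamma A}$. First I would observe that, since $\gamma\in\left]0,2\beta\right[$, we have $2\beta/\gamma>1$, so by choosing $\varepsilon\in\left]0,\min\{1,2\beta/\gamma-1\}\right[$ the constant sequence $\delta_n\equiv 1$ lies in $\left[\varepsilon,2\beta/\gamma-\varepsilon\right]$; this places us in the setting of Theorem~\ref{t:1}.

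With $\delta_n=1$, the inclusion appearing in Step 1 of Theorem~\ref{t:1} reduces to $q_n\in A p_n$ together with the decomposition $s_n:=x_n-\gamma P_VBx_n+\gamma y_n=p_n+\gamma q_n$. These two conditions are equivalent to $s_n\in(\Id+\gamma A)p_n$, i.e.\ $p_n=J_{\gamma A}s_n$ and $q_n=(s_n-p_n)/\gamma$. This justifies the second line of \eqref{e:algo}.

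Next I would match the update of the dual variable. Since $x_n\in V$ and $y_n\in V^{\bot}$, and $P_VBx_n\in V$, we have $P_{V^{\bot}}s_n=\gamma y_n$, hence
\begin{equation}
P_{V^{\bot}}q_n=\frac{P_{V^{\bot}}s_n-P_{V^{\bot}}p_n}{\gamma}=y_n-\frac{P_{V^{\bot}}p_n}{\gamma}=y_n+\frac{P_Vp_n-p_n}{\gamma}.
\end{equation}
Substituting into the update $y_{n+1}=y_n+\lambda_n(P_{V^{\bot}}q_n-y_n)$ from Theorem~\ref{t:1} yields exactly the third line of \eqref{e:algo}; the fourth line coincides with the primal update of Theorem~\ref{t:1}. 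Therefore the sequences produced by \eqref{e:algo} coincide with those produced by the scheme of Theorem~\ref{t:1} with $\delta_n\equiv 1$, and all three conclusions (i)--(iii) follow directly from Theorem~\ref{t:1}\ref{t:1i}--\ref{t:1iii}.

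The only genuinely non-routine step is the verification that the implicit Step 1 of Theorem~\ref{t:1} collapses to a single resolvent evaluation; the main obstacle is the book-keeping with $P_V$ and $P_{V^{\bot}}$, in particular checking that the component of $q_n$ lying in $V^{\bot}$ can be rewritten purely in terms of $p_n$ and $y_n$ so that the explicit update $y_{n+1}=y_n+(\lambda_n/\gamma)(P_Vp_n-p_n)$ emerges without computing $P_Vq_n$. Once this identification is in place, no further convergence analysis is required.
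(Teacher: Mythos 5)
Your proposal is correct and follows essentially the same route as the paper: the paper likewise sets $q_n=(s_n-p_n)/\gamma$, checks that $q_n\in Ap_n$, $s_n=p_n+\gamma q_n$, and $P_{V^{\bot}}p_n=\gamma(y_n-P_{V^{\bot}}q_n)$, and then invokes Theorem~\ref{t:1} with $\delta_n\equiv1\in\left]0,2\beta/\gamma\right[$. Your bookkeeping with $P_V$ and $P_{V^{\bot}}$, including the rewriting of the dual update, matches the paper's argument.
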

\begin{proof}
For every $n\in\NN$, set $q_n=(s_n-p_n)/\gamma$. It follows from
\eqref{e:algo} that
\begin{equation}
\begin{cases}
\gamma q_n=s_n-p_n\in\gamma Ap_n\\
s_n=p_n+\gamma q_n,
\end{cases}
\end{equation}
which yield $x_n-\delta_n\gamma P_VBx_n+\gamma y_n=p_n+\gamma q_n$,
$p_n-P_Vp_n=P_{V^{\bot}}p_n=\gamma(y_n-P_{V^{\bot}}q_n)$, and $q_n\in
Ap_n$. Therefore, \eqref{e:algo} is a particular case
of \eqref{e:auxprop} when
$\delta_n\equiv1\in\left]0,2(\beta/\gamma)\right[$ and the results
follow from Theorem~\ref{t:1}.
\end{proof}

\begin{remark}
Note that, when $V=\HH$, \eqref{e:algo} reduces to
\begin{equation}
x_{n+1}=x_n+\lambda_n\big(J_{\gamma A}(x_n-\gamma Bx_n)-x_n\big),
\end{equation}
which is the forward--backward splitting with constant step size
(see \cite{Opti04} and the references therein).
\end{remark}

\begin{remark}
\label{r:simil}
Set $a_n\equiv b_n\equiv0$ in Theorem~\ref{t:0}, set
$\gamma\in\left]0,2\beta\right[$ and
$\delta_n\equiv1$ in Theorem~\ref{t:1}, and let
$(\lambda_n)_{n\in\NN}$ be a sequence in $\left[\varepsilon,1\right]$
for some $\varepsilon\in\left]0,1\right[$. Moreover denote by
$(x_n^1,y_n^1)_{n\in\NN}$ the sequence in $V\times V^{\bot}$ generated
by Theorem~\ref{t:0} and by $(x_n^2,y_n^2)_{n\in\NN}$ the sequence in
$V\times V^{\bot}$ generated by Theorem~\ref{t:1} when
$x_0^1=x_0^2=x_0\in V$ and $y_0^1=y_0^2=y_0\in V^{\bot}$. Then, for
every $n\in\NN$, $x_n^1=x_n^2$ and $y_n^1=y_n^2$. Indeed,
$x_0^1=x_0^2$ and $y_0^1=y_0^2$ by assumption. Proceeding by
mathematical induction, suppose that $x_n^1=x_n^2=x_n$ and
$y_n^1=y_n^2=y_n$. Hence, we deduce from \eqref{e:algo0}, $a_n\equiv
b_n\equiv0$, and \eqref{e:algo} that 
\begin{align}
x_{n+1}^1&=x_n^1+\lambda_n(P_VJ_{\gamma A}(x_n^1-\gamma
P_VBx_n^1+\gamma y_n^1)-x_n^1)\nonumber\\ 
&=x_n^2+\lambda_n(P_VJ_{\gamma A}(x_n^2-\gamma
P_VBx_n^2+\gamma y_n^2)-x_n^2)\nonumber\\
&=x_{n+1}^2.
\end{align}
Moreover, since $P_{V^{\bot}}=\Id-P_V$, we obtain
\begin{align}
y_{n+1}^1&=y_n^1-(\lambda_n/\gamma)P_{V^{\bot}}J_{\gamma
A}(x_n^1-\gamma
P_VBx_n^1+\gamma y_n^1)\nonumber\\ 
&=y_n^2-(\lambda_n/\gamma)P_{V^{\bot}}J_{\gamma A}(x_n^2-\gamma
P_VBx_n^2+\gamma y_n^2)\nonumber\\
&=y_{n+1}^2,
\end{align}
which yields the result. Therefore, both algorithms are the same in
this case. However, even if both methods are very
similar, they can be used differently depending on the nature of each
problem. Indeed, the algorithm proposed in
Theorem~\ref{t:0} allows for explicit errors in the computation of
the operators involved in the general case and the relaxation
parameters $(\lambda_n)_{n\in\NN}$ are allowed to be greater than
those of the method in Theorem~\ref{t:1}. On the other hand, the
method in Theorem~\ref{t:1} allows for a dynamic step size $\delta_n$
in the general case, which is not permitted in the algorithm proposed
in Theorem~\ref{t:0}.
\end{remark}

\section{Applications}
\label{sec:5}
In this section we study two applications of our algorithms. First we
study the problem of finding a zero of the sum of $m$ maximally
monotone operators and a cocoercive operator and, next, we study the
variational case. Connections with other methods in
this framework are also provided. 

\subsection{Inclusion involving the sum of $m$ monotone operators}
Let us consider the following problem.
\begin{problem}
\label{prob:Fad}
Let $(\mathsf{H},|\cdot|)$ be a real Hilbert space, 
for every $i\in\{1,\ldots,m\}$, 
let $\mathsf{A}_i\colon\mathsf{H}\to 2^{\mathsf{H}}$
be a maximally monotone operator, and
let $\mathsf{B}\colon\mathsf{H}\to\mathsf{H}$ be a $\beta$--cocoercive
operator.
The problem is to
\begin{equation}
\text{find}\quad \mathsf{x}\in\mathsf{H}\quad\text{such that}
\quad \mathsf{0}\in\sum_{i=1}^m\mathsf{A}_i\mathsf{x}
+\mathsf{B}\mathsf{x},
\end{equation}
under the assumption that such a solution exists.
\end{problem}

Problem~\ref{prob:Fad} has several applications in image processing, 
principally in the variational setting (see, e.g.,
\cite{Invp08,Fadi12} and the references therein), variational
inequalities \cite{Tsen90,Tsen91}, partial differential
equations \cite{Merc80}, and economics \cite{Jofr07,Penn12}, among
others. In \cite{Fadi12,Bang12} two different methods for solving 
Problem~\ref{prob:Fad} are proposed. In \cite{Bang12} auxiliary
variables are included for solving a more general problem including
linear transformations and additional strongly monotone operators.
This generality does not exploits the intrinsic properties of
Problem~\ref{prob:Fad} and it restricts the choice of
the parameters involved. On the other hand, the method in
\cite{Fadi12} takes into advantage the structure of the problem, but
involves restricting relaxation parameters and errors. We provide an
alternative version to the latter method, which allows for a wider
class of errors and relaxation parameters. The method is obtained as a
consequence of Theorem~\ref{t:0} and the version obtained from
Theorem~\ref{t:1} is also examined.

Let us provide a connection between
Problem~\ref{prob:Fad} and Problem~\ref{prob:1} via product space techniques.
Let $(\omega_i)_{1\leq i\leq m}$ be real numbers in $\zeroun$ such that
$\sum_{i=1}^m\omega_i=1$, let $\HH$ be the real 
Hilbert space obtained by endowing 
the Cartesian product $\mathsf{H}^m$ with the scalar product and
associated norm respectively defined by 
\begin{equation}
\label{e:prodscal}
\scal{\cdot}{\cdot}\colon (x,y)\mapsto
\sum_{i=1}^m\omega_i\pscal{\mathsf{x}_i}{\mathsf{y}_i}\quad\text{and}
\quad
\|\cdot\|\colon
x\mapsto\sqrt{\sum_{i=1}^m\omega_i\mathsf{|}\mathsf{x}_i\mathsf{|}^2},
\end{equation}
where $x=(\mathsf{x}_i)_{1\leq i\leq m}$ is a generic element of $\HH$.
Define 
\begin{equation}
\label{e:defprodc}
\begin{cases}
V=\menge{x=(\mathsf{x}_i)_{1\leq i\leq m}\in\HH}{\mathsf{x}_1=\cdots=\mathsf{x}_m}\\
j\colon\mathsf{H}\to V\subset\HH\colon\mathsf{x}\mapsto(\mathsf{x},\ldots,\mathsf{x})\\
A\colon\HH\to 2^{\HH}\colon x\mapsto \frac{1}{\omega_1}\mathsf{A}_1\mathsf{x}_1\times\cdots\times
\frac{1}{\omega_m}\mathsf{A}_m\mathsf{x}_m\\
B\colon\HH\to\HH\colon x\mapsto(\mathsf{B}\mathsf{x}_1,\ldots,\mathsf{B}\mathsf{x}_m).
\end{cases}
\end{equation}
\begin{proposition}
\label{p:app1}
Let $\mathsf{H}$, $(\mathsf{A}_i)_{1\leq i\leq m}$, and $\mathsf{B}$ be as in 
Problem~\ref{prob:Fad}, and let $V$, $j$, $A$, and $B$ be as in \eqref{e:defprodc}.
Then the following hold.
\begin{enumerate}
\item\label{p:app1i} $V$ is a closed vector subspace of $\HH$, 
$P_V\colon (\mathsf{x}_i)_{1\leq
i\leq m}\mapsto j(\sum_{i=1}^m\omega_i\mathsf{x}_i)$, and 
\begin{equation}
N_V\colon\HH\to 2^{\HH}\colon x\mapsto
\begin{cases}
V^{\bot}=\menge{x=(\mathsf{x}_i)_{1\leq i\leq m}\in\HH}{\sum_{i=1}^m\omega_i\mathsf{x}_i=\mathsf{0}},
\quad&\text{if}\:\:x\in V;\\
\emp,&\text{otherwise}.
\end{cases}
\end{equation}
\item\label{p:app1ii} $j\colon\mathsf{H}\to V$ is a bijective isometry and $j^{-1}\colon(\mathsf{x},\ldots,\mathsf{x})
\mapsto \mathsf{x}$.
\item\label{p:app1iii} $A$ is a maximally monotone operator and, for
every $\gamma\in\RPP$, 
$J_{\gamma A}\colon
(\mathsf{x}_i)_{1\leq
i\leq m}\mapsto(J_{\gamma\mathsf{A}_i/\omega_i}\mathsf{x}_i)$.
\item\label{p:app1iv} $B$ is $\beta$--cocoercive, 
$B(j(\mathsf{x}))=j(\mathsf{B}\mathsf{x})$, and $B(V)\subset V$.
\item\label{p:app1v} For every $\mathsf{x}\in\mathsf{H}$, $\mathsf{x}$ is a solution to Problem~\ref{prob:Fad}
if and only if $j(\mathsf{x})\in\zer(A+B+N_V)$.
\end{enumerate}
\end{proposition}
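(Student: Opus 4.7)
The proof is essentially a routine verification that the product-space reformulation behaves correctly with the chosen weighted inner product. The plan is to address each item in turn, in the order stated.

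For part~\ref{p:app1i}, I would first observe that $V$ is the kernel of the continuous linear map $(\mathsf{x}_i)\mapsto(\mathsf{x}_1-\mathsf{x}_2,\ldots,\mathsf{x}_1-\mathsf{x}_m)$, hence a closed vector subspace. To compute $P_V$, I would show that for any $x=(\mathsf{x}_i)\in\HH$ the vector $j(\sum_i\omega_i\mathsf{x}_i)$ lies in $V$, while the difference $x-j(\sum_i\omega_i\mathsf{x}_i)$ is orthogonal to every $j(\mathsf{z})\in V$ in the weighted inner product \eqref{e:prodscal}: this last check uses $\sum_i\omega_i=1$. From this orthogonal decomposition one obtains $V^{\bot}=\{x\in\HH\mid\sum_i\omega_i\mathsf{x}_i=\mathsf{0}\}$, and the formula for $N_V$ is then just \eqref{e:normalcone} specialized to a subspace.

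For parts \ref{p:app1ii} and \ref{p:app1iv}, the isometry property follows directly from $\|j(\mathsf{x})\|^2=\sum_i\omega_i|\mathsf{x}|^2=|\mathsf{x}|^2$, and the inverse is explicit. Cocoercivity of $B$ in the weighted norm is immediate from componentwise cocoercivity of $\mathsf{B}$ and the definition \eqref{e:prodscal}; the identity $B(j(\mathsf{x}))=j(\mathsf{B}\mathsf{x})$ is read off from the definition of $B$, and $B(V)\subset V$ follows.

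For part~\ref{p:app1iii}, the monotonicity of $A$ in the weighted inner product reduces, via the rescaling $\omega_i u_i\in\mathsf{A}_i\mathsf{x}_i$, to the monotonicity of each $\mathsf{A}_i$; maximality then follows from the maximality of each $\mathsf{A}_i$ together with the standard characterization of product maximal monotone operators (or directly via Minty's theorem, solving the range condition componentwise). For the resolvent, $y=J_{\gamma A}x$ means $x\in y+\gamma Ay$, which componentwise reads $\mathsf{x}_i\in\mathsf{y}_i+(\gamma/\omega_i)\mathsf{A}_i\mathsf{y}_i$, i.e.\ $\mathsf{y}_i=J_{\gamma\mathsf{A}_i/\omega_i}\mathsf{x}_i$.

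Part~\ref{p:app1v} is the only item with content beyond bookkeeping, and is probably the trickiest purely because one has to keep track of the weights. I would argue as follows. If $j(\mathsf{x})\in\zer(A+B+N_V)$, there exist $u\in A j(\mathsf{x})$ and $w\in N_V j(\mathsf{x})=V^{\bot}$ with $u+Bj(\mathsf{x})+w=\mathsf{0}$; taking the $i$-th component gives $\omega_i u_i\in\mathsf{A}_i\mathsf{x}$ and $\omega_i u_i+\omega_i\mathsf{B}\mathsf{x}+\omega_i w_i=\mathsf{0}$. Summing over $i$ and using $w\in V^{\bot}$, i.e.\ $\sum_i\omega_i w_i=\mathsf{0}$, yields $\sum_i\omega_i u_i+\mathsf{B}\mathsf{x}=\mathsf{0}$, which gives $\mathsf{0}\in\sum_i\mathsf{A}_i\mathsf{x}+\mathsf{B}\mathsf{x}$. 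Conversely, given a solution $\mathsf{x}$ and a decomposition $-\mathsf{B}\mathsf{x}=\sum_i\mathsf{a}_i$ with $\mathsf{a}_i\in\mathsf{A}_i\mathsf{x}$, I would set $u_i=\mathsf{a}_i/\omega_i$ and $w_i=-u_i-\mathsf{B}\mathsf{x}$; then $u\in Aj(\mathsf{x})$, a direct computation with $\sum_i\omega_i=1$ shows $\sum_i\omega_i w_i=\mathsf{0}$, hence $w\in V^{\bot}=N_V j(\mathsf{x})$, and by construction $u+Bj(\mathsf{x})+w=\mathsf{0}$. No step here is a real obstacle; the only place to be careful is to consistently use the weighted inner product rather than the standard one throughout, so that the projection formula, the resolvent formula, and the characterization of $V^{\bot}$ are all mutually consistent.
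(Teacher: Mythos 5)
Your proposal is correct and follows essentially the same route as the paper: the weighted orthogonal decomposition for $P_V$ and $V^{\bot}$, componentwise cocoercivity for $B$, the rescaling $\omega_i u_i\in\mathsf{A}_i\mathsf{x}_i$ for $A$ and its resolvent, and the same weight-tracking argument for part (v) (which the paper writes as a single chain of equivalences rather than two implications). The only cosmetic difference is that you prove maximality of $A$ directly via Minty where the paper cites \cite[Proposition~23.16]{Livre1}.
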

\begin{proof}
\ref{p:app1i}\&\ref{p:app1ii}: They follow from \eqref{e:normalcone}
and easy computations.
\ref{p:app1iii}: See \cite[Proposition~23.16]{Livre1}.
\ref{p:app1iv}: Let $x=(\mathsf{x}_i)_{1\leq i\leq m}$ and
$y=(\mathsf{y}_i)_{1\leq i\leq m}$ be in $\HH$. Then, it follows from
\eqref{e:defprodc} and the $\beta$--cocoercivity of $\mathsf{B}$ that
\begin{equation}
\scal{Bx-By}{x-y}=\sum_{i=1}^m\omega_i
\pscal{\mathsf{B}\mathsf{x}_i-\mathsf{B}
\mathsf{y}_i}{\mathsf{x}_i-\mathsf{y}_i}\geq\beta\sum_{i=1}^m
\omega_i\mathsf{|}\mathsf{B}\mathsf{x}_i-\mathsf{B}
\mathsf{y}_i\mathsf{|}^2=\beta\|Bx-By\|^2,
\end{equation}
which yields the cocoercivity of $B$. The other results are clear
from the definition.

\ref{p:app1v}: Let $\mathsf{x}\in\mathsf{H}$. We have
\begin{align}
\mathsf{0}\in\sum_{i=1}^m\mathsf{A}_i\mathsf{x}+\mathsf{B}\mathsf{x}\quad
&\Leftrightarrow\quad\bigg(\exi(\mathsf{y}_i)_{1\leq i\leq m}\in
\overset{m}{\underset{i=1}{\cart}}\mathsf{A}_i\mathsf{x}\bigg)
\quad \mathsf{0}=\sum_{i=1}^m\mathsf{y}_i+\mathsf{B}\mathsf{x}\nonumber\\
&\Leftrightarrow\quad\bigg(\exi(\mathsf{y}_i)_{1\leq i\leq m}\in
\overset{m}{\underset{i=1}{\cart}}\mathsf{A}_i\mathsf{x}\bigg)
\quad \mathsf{0}=\sum_{i=1}^m\omega_i(-\mathsf{y}_i/\omega_i-\mathsf{B}\mathsf{x})\nonumber\\
&\Leftrightarrow\quad\bigg(\exi(\mathsf{y}_i)_{1\leq i\leq m}\in
\overset{m}{\underset{i=1}{\cart}}\mathsf{A}_i\mathsf{x}\bigg)
\quad -(\mathsf{y}_1/\omega_1,\ldots,\mathsf{y}_m/\omega_m)-j(\mathsf{B}\mathsf{x})\in
V^{\bot}=N_V(j(\mathsf{x}))\nonumber\\
&\Leftrightarrow\quad
0\in A(j(\mathsf{x}))+B(j(\mathsf{x}))+N_V(j(\mathsf{x}))\nonumber\\
&\Leftrightarrow\quad j(\mathsf{x})\in\zer(A+B+N_V),
\end{align}
which yields the result.
\end{proof}

The following algorithm solves
Problem~\ref{prob:Fad} and is a direct consequence of
Theorem~\ref{t:0}. 
\begin{proposition}
\label{p:4}
Let $\gamma\in\left]0,2\beta\right[$, let
$\alpha=\max\{2/3,2\gamma/(\gamma+2\beta)\}$,
let $(\lambda_n)_{n\in\NN}$ be a sequence in
$\left]0,1/\alpha\right[$, for every
$i\in\{1,\ldots,m\}$, let
$(\mathsf{a}_n)_{n\in\NN}$ and 
$(\mathsf{b}_{i,n})_{n\in\NN}$ be sequences in
$\mathsf{H}$, and suppose that
\begin{equation}
\label{e:conderroaux}
\sum_{n\in\NN}\lambda_n(1-\alpha\lambda_n)=\pinf\quad\text{and}\quad
\max_{1\leq i\leq m}\sum_{n\in\NN}
\lambda_n\Big(\mathsf{|}\mathsf{a}_n|+|\mathsf{b}_{i,n}|\Big)<\pinf.
\end{equation}
Moreover let $(\mathsf{z}_{i,0})_{1\leq i\leq
m}\in\mathsf{H}^m$
and consider the following routine.
\begin{align}
\label{e:algo2}
(\forall n\in\NN)\quad
&\left\lfloor 
\begin{array}{l}
\mathsf{x}_{n}=\sum_{i=1}^m\omega_i\mathsf{z}_{i,n}\\
\text{\rm For }i=1,\ldots,m\\
\left\lfloor
\begin{array}{l}
\mathsf{s}_{i,n}=2\mathsf{x}_n-\mathsf{z}_{i,n}-\gamma
(\mathsf{B}\mathsf{x}_n+\mathsf{a}_n)\\
\mathsf{p}_{i,n}=J_{\gamma \mathsf{A}_i/\omega_i}\mathsf{s}_{i,n}
+\mathsf{b}_{i,n}\\
\mathsf{z}_{i,n+1}=\mathsf{z}_{i,n}+\lambda_n(\mathsf{p}_{i,n}
-\mathsf{x}_n).\\
\end{array}
\right.\\
\end{array}
\right.
\end{align}
Then, the following hold for 
some solution $\overline{\mathsf{x}}$ to Problem~\ref{prob:Fad}.
\begin{enumerate}
\item $\mathsf{x}_n\weakly\overline{\mathsf{x}}$.
\item $\mathsf{B}\mathsf{x}_n\to\mathsf{B}\overline{\mathsf{x}}$.
\item $\mathsf{x}_{n+1}-\mathsf{x}_n\to 0$.
\end{enumerate}
\end{proposition}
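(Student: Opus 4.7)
The plan is to apply Theorem~\ref{t:0} directly in the product Hilbert space $\HH=\mathsf{H}^m$ equipped with the weighted scalar product \eqref{e:prodscal}, using the data $V$, $A$, $B$ from \eqref{e:defprodc} and the equivalence established in Proposition~\ref{p:app1}. Concretely, I set $z_n=(\mathsf{z}_{i,n})_{1\le i\le m}$, let $a_n=j(\mathsf{a}_n)\in V$ and $b_n=(\mathsf{b}_{i,n})_{1\le i\le m}$ play the roles of the two error sequences in Theorem~\ref{t:0}, and verify that the abstract recursion \eqref{e:algo0} decouples coordinate-wise into \eqref{e:algo2}.

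To carry out the decoupling, Proposition~\ref{p:app1}\ref{p:app1i} gives $x_n=P_Vz_n=j(\mathsf{x}_n)$ with $\mathsf{x}_n=\sum_{i=1}^m\omega_i\mathsf{z}_{i,n}$, and $y_n=(x_n-z_n)/\gamma$ yields $x_n+\gamma y_n=2x_n-z_n$. Proposition~\ref{p:app1}\ref{p:app1iv} ensures that $Bx_n=j(\mathsf{B}\mathsf{x}_n)\in V$, so together with $a_n\in V$ the outer projection $P_V$ in the definition of $s_n$ disappears and
\begin{equation*}
s_n=2x_n-z_n-\gamma\,j(\mathsf{B}\mathsf{x}_n+\mathsf{a}_n),
\end{equation*}
whose $i$-th coordinate is exactly $\mathsf{s}_{i,n}=2\mathsf{x}_n-\mathsf{z}_{i,n}-\gamma(\mathsf{B}\mathsf{x}_n+\mathsf{a}_n)$. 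The product formula for the resolvent in Proposition~\ref{p:app1}\ref{p:app1iii} then gives $\mathsf{p}_{i,n}=J_{\gamma\mathsf{A}_i/\omega_i}\mathsf{s}_{i,n}+\mathsf{b}_{i,n}$, and the update rule $z_{n+1}=z_n+\lambda_n(p_n-x_n)$ matches component by component the last line of \eqref{e:algo2}.

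I next transfer the hypotheses of Theorem~\ref{t:0}. From \eqref{e:prodscal} and $\sum_i\omega_i=1$ one computes $\|a_n\|=|\mathsf{a}_n|$ and $\|b_n\|=\bigl(\sum_i\omega_i|\mathsf{b}_{i,n}|^2\bigr)^{1/2}\le\max_{1\le i\le m}|\mathsf{b}_{i,n}|$, so the summability assumption \eqref{e:conderroaux} implies the summability condition \eqref{e:conderro} of Theorem~\ref{t:0}. Proposition~\ref{p:app1} supplies the remaining structural requirements: $A$ is maximally monotone, $B$ is $\beta$-cocoercive (hence in particular cocoercive on $V$), and the solution set of the product inclusion is nonempty since Problem~\ref{prob:Fad} is assumed solvable.

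Applying Theorem~\ref{t:0} yields $\overline{x}\in\zer(A+B+N_V)$ with $x_n\weakly\overline{x}$, $x_{n+1}-x_n\to 0$, and $P_VBx_n\to P_VB\overline{x}$ (the latter via item \ref{t:0iii} of Theorem~\ref{t:0} combined with the ensuing remark). Proposition~\ref{p:app1}\ref{p:app1v} writes $\overline{x}=j(\overline{\mathsf{x}})$ for some solution $\overline{\mathsf{x}}$ of Problem~\ref{prob:Fad}, and pulling the three preceding assertions back through the isometric isomorphism $j^{-1}\colon V\to\mathsf{H}$ delivers precisely the three claims. The only non-routine ingredient is the strong convergence in (ii), which relies on the remark following Theorem~\ref{t:0} and thus on $\varliminf\lambda_n>0$ being implicit in the formulation; beyond that the proof is bookkeeping, powered by the invariance $B(V)\subset V$ and $a_n\in V$ that trivialises every occurrence of $P_V$.
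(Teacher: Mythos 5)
Your proof is correct and follows essentially the same route as the paper: transfer to the weighted product space via Proposition~\ref{p:app1}, identify \eqref{e:algo2} with the abstract recursion \eqref{e:algo0}, verify the error summability, and invoke Theorem~\ref{t:0} together with Proposition~\ref{p:app1}\ref{p:app1v}. Your observation that assertion (ii) as stated needs $\varliminf\lambda_n>0$ (since Theorem~\ref{t:0}\ref{t:0iii} only yields summability of $\lambda_n\|P_V(Bx_n-B\overline{x})\|^2$) is a legitimate caveat that the paper's one-line conclusion glosses over.
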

\begin{proof}
Set, for every $n\in\NN$, $x_n=j(\mathsf{x}_n)$,
$a_n=j(\mathsf{a}_n)$, $b_n=(\mathsf{b}_{i,n})_{1\leq i\leq m}$,
$y_n=(\mathsf{y}_{i,n})_{1\leq i\leq m}$,
$z_n=(\mathsf{z}_{i,n})_{1\leq i\leq m}$,
$p_n=(\mathsf{p}_{i,n})_{1\leq i\leq m}$, and 
$q_n=(\mathsf{q}_{i,n})_{1\leq i\leq m}$.
It follows from Proposition~\ref{p:app1}\ref{p:app1i} and
\eqref{e:algo2} that, for every $n\in\NN$, $x_n=P_Vz_n$. Hence,
it follows from \eqref{e:defprodc} and Proposition~\ref{p:app1} that
\eqref{e:algo2} can be written equivalently as
\begin{align}
\label{e:algo0aux}
(\forall n\in\NN)\quad
&\left 
\lfloor 
\begin{array}{l}
x_n=P_Vz_n\\
y_n=(x_n-z_n)/\gamma\\
s_n=x_n-\gamma P_V\big(Bx_n+a_n\big)+\gamma y_n\\
p_n=J_{\gamma A}s_n+b_n\\
z_{n+1}=z_n+\lambda_n(p_n-x_n).
\end{array}
\right.
\end{align}
Moreover, it follows from \eqref{e:prodscal} and
\eqref{e:conderroaux} that
\begin{equation}
\sum_{n\in\NN}\lambda_n(\|a_n\|+\|b_n\|)=\sum_{n\in\NN}
\lambda_n\Bigg(|\mathsf{a}_n|+\sqrt{\sum_{i=1}^m\omega_i
|\mathsf{b}_{i,n}|^2}\Bigg)\leq
\sum_{n\in\NN}
\lambda_n\Bigg(|\mathsf{a}_n|+\sum_{i=1}^m
|\mathsf{b}_{i,n}|\Bigg)<\pinf.
\end{equation}
Altogether, Theorem~\ref{t:0} and
Proposition~\ref{p:app1}\ref{p:app1v} yield the results.
\end{proof}

\begin{remark}\
\label{r:fad2}
\begin{enumerate}
\item In the particular case when $(\lambda_n)_{n\in\NN}$ is such
that $0<\varliminf\lambda_n\leq\varlimsup\lambda_n<1/\alpha$ and
the errors are summable, the algorithm \eqref{e:algo2} reduces to the
method in \cite{Fadi12}. Condition \eqref{e:conderroaux} allows for
a larger class of errors and relaxation parameters.

\item Set $\mathsf{a}_n\equiv0$, for every
$i\in\{1,\ldots,m\}$, set $\mathsf{b}_{i,n}\equiv0$, let
$\gamma\in\left]0,2\beta\right[$, and let $(\lambda_n)_{n\in\NN}$ be
a sequence in $\left[\varepsilon,1\right]$ for some
$\varepsilon\in\left]0,1\right[$. Then it follows from
Remark~\ref{r:simil} that the algorithm in Proposition~\ref{p:4}
coincides with the routine: let $\mathsf{x}_0\in\mathsf{H}$, let
$(\mathsf{y}_{i,0})_{1\leq i\leq m}\in\mathsf{H}^m$ such that 
$\sum_{i=1}^m\omega_i\mathsf{y}_{i,0}=0$, and set
\begin{align}
\label{e:algoremark}
(\forall n\in\NN)\quad
&\left\lfloor 
\begin{array}{l}
\text{\rm For }i=1,\ldots,m\\
\left\lfloor
\begin{array}{l}
\mathsf{s}_{i,n}=\mathsf{x}_n-\gamma\mathsf{B}
\mathsf{x}_n+\gamma\mathsf{y}_{i,n}\\
\mathsf{p}_{i,n}=J_{\gamma \mathsf{A}_i/\omega_i}\mathsf{s}_{i,n}\\
\mathsf{y}_{i,n+1}=\mathsf{y}_{i,n}+(\lambda_n/\gamma)(\sum_{i=1}
^m\omega_i\mathsf{p}_{i,n}-\mathsf{p}_{i,n})\\
\end{array}
\right.\\
\mathsf{x}_{n+1}=\mathsf{x}_n+\lambda_n(\sum_{i=1}
^m\omega_i\mathsf{p}_{i,n}-\mathsf{x}_n)
\end{array}
\right.
\end{align}
which is the method proposed in Corollary~\ref{c:0} applied to
Problem~\ref{prob:Fad}. In the particular case
when $\mathsf{B}=0$, $\gamma=1$, and $\lambda_n\equiv1$,
\eqref{e:algoremark} reduces to \cite[Corollary~2.6]{Joca09}.

\item It follows from \eqref{e:aux1} that, in the case 
when $B=0$, the method proposed in 
Proposition~\ref{p:4} follows from the iteration 
\begin{equation}
(\forall n\in\NN)\quad z_{n+1}=z_n+\lambda_n(T_\gamma
z_n+b_n-z_n)
\end{equation}
where $A$ and $V$ are defined in \eqref{e:defprodc}. This method is
very similar to the algorithm proposed in \cite[Theorem~2.5]{Joca09}.
Indeed the only difference is that instead of the operator
$T_{\gamma}=(\Id+R_{\gamma A}R_{N_V})/2$ used in
Proposition~\ref{p:4}, in \cite[Theorem~2.5]{Joca09} is used the
operator $(\Id+R_{N_V}R_{\gamma A})/2$.
\end{enumerate}
\end{remark}

\begin{corollary}
\label{c:1}
Let $\gamma\in\RPP$,
let $(\lambda_n)_{n\in\NN}$ be a sequence in
$\left]0,3/2\right[$, for every
$i\in\{1,\ldots,m\}$, let
$(\mathsf{b}_{i,n})_{n\in\NN}$ be sequences in
$\mathsf{H}$, and suppose that
\begin{equation}
\label{e:conderroaux2}
\sum_{n\in\NN}\lambda_n(3-2\lambda_n)=\pinf\quad\text{and}\quad
\max_{1\leq i\leq m}\sum_{n\in\NN}
\lambda_n|\mathsf{b}_{i,n}|<\pinf.
\end{equation}
Moreover, let
$(\mathsf{z}_{1,0},\mathsf{z}_{2,0})\in\mathsf{H}^2$
and consider the following routine.
\begin{align}
\label{e:algo3}
(\forall n\in\NN)\quad
&\left\lfloor 
\begin{array}{l}
\mathsf{x}_{n}=(\mathsf{z}_{1,n}+\mathsf{z}_{2,n})/2\\
\mathsf{p}_{1,n}=J_{2\gamma
\mathsf{A}_1}(\mathsf{z}_{2,n})+\mathsf{b}_{1,n}\\
\mathsf{p}_{2,n}=J_{2\gamma
\mathsf{A}_2}(\mathsf{z}_{1,n})+\mathsf{b}_{2,n}\\
\mathsf{z}_{1,n+1}=\mathsf{z}_{1,n}+\lambda_n(\mathsf{p}_{1,n}
-\mathsf{x}_{n})\\
\mathsf{z}_{2,n+1}=\mathsf{z}_{2,n}+\lambda_n(\mathsf{p}_{2,n}
-\mathsf{x}_{n}).
\end{array}
\right.
\end{align}
Then, the following hold for 
some solution
$\overline{\mathsf{x}}\in\zer(\mathsf{A}_1+\mathsf{A}_2)$.
\begin{enumerate}
\item $\mathsf{x}_n\weakly\overline{\mathsf{x}}$.
\item $\mathsf{x}_{n+1}-\mathsf{x}_n\to 0$.
\end{enumerate}
\end{corollary}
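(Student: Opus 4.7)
The plan is to recognize Corollary~\ref{c:1} as the direct specialization of Proposition~\ref{p:4} to $m=2$, $\omega_1=\omega_2=1/2$, $\mathsf{B}\equiv 0$, and $\mathsf{a}_n\equiv 0$, and to check that every hypothesis transfers cleanly.

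First I would deal with the averaged constant $\alpha$. Because $\mathsf{B}\equiv 0$ is $\beta$--cocoercive for every $\beta\in\RPP$, I may freely take $\beta\geq\gamma$; this forces $2\gamma/(\gamma+2\beta)\leq 2/3$, so that the quantity $\alpha=\max\{2/3,2\gamma/(\gamma+2\beta)\}$ appearing in Proposition~\ref{p:4} equals $2/3$. Consequently the interval $\left]0,1/\alpha\right[$ becomes $\left]0,3/2\right[$ as in the statement, and the divergence condition $\sum_{n\in\NN}\lambda_n(1-\alpha\lambda_n)=\pinf$ in \eqref{e:conderroaux} reduces to $\sum_{n\in\NN}\lambda_n(3-2\lambda_n)=\pinf$, which is exactly the assumption in \eqref{e:conderroaux2}. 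The summability of $(\lambda_n|\mathsf{b}_{i,n}|)$ is the same in both statements.

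Next I would verify that \eqref{e:algo3} is nothing but \eqref{e:algo2} rewritten under these choices. With $\omega_1=\omega_2=1/2$ the average step is $\mathsf{x}_n=(\mathsf{z}_{1,n}+\mathsf{z}_{2,n})/2$; after substituting $\mathsf{B}\equiv 0$ and $\mathsf{a}_n\equiv 0$, the reflection $\mathsf{s}_{i,n}=2\mathsf{x}_n-\mathsf{z}_{i,n}$ yields $\mathsf{s}_{1,n}=\mathsf{z}_{2,n}$ and $\mathsf{s}_{2,n}=\mathsf{z}_{1,n}$, and $J_{\gamma\mathsf{A}_i/\omega_i}$ collapses to $J_{2\gamma\mathsf{A}_i}$. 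The update $\mathsf{z}_{i,n+1}=\mathsf{z}_{i,n}+\lambda_n(\mathsf{p}_{i,n}-\mathsf{x}_n)$ is already in the desired form. This identification is purely algebraic.

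The conclusions $\mathsf{x}_n\weakly\overline{\mathsf{x}}$ with $\overline{\mathsf{x}}\in\zer(\mathsf{A}_1+\mathsf{A}_2)$ and $\mathsf{x}_{n+1}-\mathsf{x}_n\to 0$ then follow immediately from items~(i) and~(iii) of Proposition~\ref{p:4}, noting that $\mathsf{B}\equiv 0$ trivializes item~(ii). No real obstacle is anticipated; the only subtlety worth mentioning in the written-out proof is the elimination of the $\beta$-dependence in $\alpha$ when $\mathsf{B}\equiv 0$.
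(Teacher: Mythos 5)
Your proposal is correct and follows exactly the paper's own route: the paper also proves this corollary as the direct specialization of Proposition~\ref{p:4} with $m=2$, $\mathsf{B}=0$, $\omega_1=\omega_2=1/2$, and $\alpha=2/3$. Your write-up merely spells out the details (the choice of $\beta$ making $\alpha=2/3$ and the algebraic identification $\mathsf{s}_{1,n}=\mathsf{z}_{2,n}$, $\mathsf{s}_{2,n}=\mathsf{z}_{1,n}$) that the paper leaves implicit.
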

\begin{proof}
Is a direct consequence of Proposition~\ref{p:4} in the particular case
when $m=2$, $B=0$, $\alpha=2/3$, and $\omega_1=\omega_2=1/2$.
\end{proof}

\begin{remark}\
\begin{enumerate}
 \item
The most popular method for finding a zero of the sum of two
maximally 
monotone operators is the Douglas--Rachford splitting
\cite{Lion79,Svai10},
in which the resolvents of the operators involved are computed
sequentially.
In the case when these resolvents are hard to compute, Corollary~\ref{c:1} 
provides an alternative method which computes in parallel both resolvents.
This method is different to the parallel algorithm proposed in 
\cite[Corollary~3.4]{Siopt3}. 
\item For every
$i\in\{1,\ldots,m\}$, set $\mathsf{b}_{i,n}\equiv0$ and let
$(\lambda_n)_{n\in\NN}$ be a sequence in $\left[\varepsilon,1\right]$
for some $\varepsilon\in\left]0,1\right[$. Then it follows from
Remark~\ref{r:simil} that the algorithm in Corollary~\ref{c:1}
coincides with the routine: let $\mathsf{x}_0\in\mathsf{H}$, let
$\mathsf{v}_0\in\mathsf{H}$, and set
\begin{align}
\label{e:algo3rem}
(\forall n\in\NN)\quad
&\left\lfloor 
\begin{array}{l}
\mathsf{s}_{1,n}=\mathsf{x}_n+\gamma\mathsf{v}_{n}\\
\mathsf{s}_{2,n}=\mathsf{x}_n-\gamma\mathsf{v}_{n}\\
\mathsf{p}_{1,n}=J_{2\gamma \mathsf{A}_1}\mathsf{s}_{1,n}\\
\mathsf{p}_{2,n}=J_{2\gamma \mathsf{A}_2}\mathsf{s}_{2,n}\\
\mathsf{v}_{n+1}=\mathsf{v}_n+(\lambda_n/(2\gamma))(\mathsf{p}_{2,n}
-\mathsf{p}_{1,n})\\
\mathsf{x}_{n+1}=(1-\lambda_n)\mathsf{x}_n
+(\lambda_n/2)(\mathsf{p}_{1,n}+\mathsf{p}_{2,n}),
\end{array}
\right.
\end{align}
which is the method proposed in \eqref{e:algoremark} applied to find
a zero of $\mathsf{A}_1+\mathsf{A}_2$ when $\omega_1=\omega_2=1/2$
and $\mathsf{y}_{1,n}=-\mathsf{y}_{2,n}=\mathsf{v}_n$.
\end{enumerate}

\end{remark}

\subsection{Variational case}
We apply the results of the previous sections to
minimization problems. Let us first recall some standard notation 
and results \cite{Livre1,Zali02}. 
We denote by $\Gamma_0(\HH)$ be the class 
of lower semicontinuous convex functions $f\colon\HH\to\RX$ such that
$\dom f=\menge{x\in\HH}{f(x)<\pinf}\neq\emp$. Let
$f\in\Gamma_0(\HH)$. 
The function $f+\|\cdot-z\|^2/2$
possesses a unique minimizer, which is denoted by $\prox_fz$.
Alternatively, 
\begin{equation}
\label{e:prox2}
\prox_f=(\Id+\partial f)^{-1}=J_{\partial f},
\end{equation}
where $\partial f\colon\HH\to 2^{\HH}\colon x\mapsto
\menge{u\in\HH}{(\forall y\in\HH)\;\:\scal{y-x}{u}+f(x)\leq f(y)}$ 
is the subdifferential of $f$, which is a maximally monotone 
operator. Finally, let $C$ be a convex subset of $\HH$. 
The indicator function of $C$ is denoted by $\iota_C$ and its strong
relative interior 
(the set of points in $x\in C$ such that the cone 
generated by $-x+C$ is a closed vector subspace of $\HH$)
by $\sri C$.
The following facts will also be required.
\begin{proposition}
\label{p:cq}
Let $V$ be a closed vector subspace of $\HH$, let 
$f\in\Gamma_0(\HH)$ be such that $V\cap\dom f\neq\emp$, let
$g\colon\HH\to\RR$ be differentiable and convex. 
Then the following hold. 
\begin{enumerate}
\item
\label{p:cqi}
$\zer(\partial f+\nabla g+N_V)\subset\Argmin(f+g+\iota_V)$.
\item
\label{p:cqii}
Suppose that one of the following is satisfied.
\begin{enumerate}
\item
\label{p:cqiiia}
$\Argmin(f+g+\iota_V)\neq\emp$ and $0\in\sri(\dom f-V)$.
\item
\label{p:cqiiib}
$\Argmin(f+g+\iota_V)
\subset\Argmin f
\cap\Argmin(g+\iota_V)\neq\emp$.
\end{enumerate}
Then $\zer(\partial f+\nabla g+N_V)\neq\emp$.
\end{enumerate}
\end{proposition}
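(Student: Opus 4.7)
For part~\ref{p:cqi}, I would rely on the universally valid inclusion $\partial f+\partial g+\partial\iota_V\subset\partial(f+g+\iota_V)$, which holds for any three functions in $\Gamma_0(\HH)$ with no qualification condition needed. Since $g$ is convex and differentiable on $\HH$, we have $\partial g=\{\nabla g\}$, and since $V$ is a nonempty closed convex set, $\partial\iota_V=N_V$. Hence any $x\in\zer(\partial f+\nabla g+N_V)$ satisfies $0\in\partial(f+g+\iota_V)(x)$, and Fermat's rule yields $x\in\Argmin(f+g+\iota_V)$.

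For part~\ref{p:cqii}\ref{p:cqiiia}, the strategy is to upgrade the above inclusion into an equality through the assumed qualification condition. The assumption $0\in\sri(\dom f-V)$ is precisely the hypothesis that, via a standard sum-rule result from \cite{Livre1}, yields $\partial(f+\iota_V)=\partial f+N_V$. Combining this with the differentiability of $g$ on all of $\HH$, we obtain $\partial(f+g+\iota_V)=\partial f+\nabla g+N_V$. Picking any $x\in\Argmin(f+g+\iota_V)$, Fermat's rule then gives $0\in\partial(f+g+\iota_V)(x)=\partial f(x)+\nabla g(x)+N_V(x)$, so $x\in\zer(\partial f+\nabla g+N_V)$.

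For part~\ref{p:cqii}\ref{p:cqiiib}, I would pick any $x$ in the nonempty intersection $\Argmin f\cap\Argmin(g+\iota_V)$. From $x\in\Argmin f$, Fermat's rule gives $0\in\partial f(x)$. From $x\in\Argmin(g+\iota_V)$, together with $\dom g=\HH$ (so the trivial qualification applies and $\partial(g+\iota_V)=\nabla g+N_V$), we obtain $0\in\nabla g(x)+N_V(x)$, i.e., $-\nabla g(x)\in N_V(x)$. Writing $0=0+\nabla g(x)+(-\nabla g(x))$ then exhibits $0$ as an element of $\partial f(x)+\nabla g(x)+N_V(x)$, whence $x\in\zer(\partial f+\nabla g+N_V)$.

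The only non-trivial step is~\ref{p:cqii}\ref{p:cqiiia}, where one must invoke the precise subdifferential sum-rule under the strong relative interior assumption; the other parts reduce to Fermat's rule together with the elementary one-sided inclusion of subdifferentials.
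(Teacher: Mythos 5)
Your proposal is correct and follows essentially the same route as the paper: the one-sided subdifferential sum inclusion plus Fermat's rule for \ref{p:cqi} and \ref{p:cqii}\ref{p:cqiiib}, and the exact sum rule under the strong-relative-interior qualification for \ref{p:cqii}\ref{p:cqiiia}. The only cosmetic difference is that in \ref{p:cqii}\ref{p:cqiiia} you group the sum as $(f+\iota_V)+g$ and apply the qualification to $f$ and $\iota_V$, whereas the paper groups it as $(f+g)+\iota_V$ and uses $\dom(f+g)=\dom f$ to invoke the same condition $0\in\sri(\dom f-V)$; both are valid since $g$ is real-valued.
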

\begin{proof}
\ref{p:cqi}: Since $\dom g=\HH$, it follows from \cite[Corollary~16.38(iii)]{Livre1}
that $\partial(f+g)=\partial f+\nabla g$. Hence, it follows
from $V\cap\dom f\neq\emp$, \cite[Proposition~16.5(ii)]{Livre1}, and
\cite[Theorem~16.2]{Livre1} that
$\zer(\partial f+\nabla g+N_V)=\zer(\partial (f+g)+N_V)
\subset\zer(\partial (f+g+\iota_V))=\Argmin(f+g+\iota_V)$. 

\ref{p:cqiiia}:
Since $\dom g=\HH$ yields $\dom(f+g)=\dom f$,
$\sri(\dom f-V)=\sri(\dom(f+g)-\dom\iota_V)$. Therefore, it 
follows from Fermat's rule (\cite[Theorem~16.2]{Livre1}) and 
\cite[Theorem~16.37(i)]{Livre1} that, for every $x\in\HH$,
\begin{align}
\emp\neq\Argmin(f+g+\iota_V)&=\zer\partial\big(f+g+\iota_V\big)\nonumber\\
&=\zer\big(\partial(f+g)+N_V\big)\nonumber\\
&=\zer\big(\partial f+\nabla g+N_V\big).
\end{align}
 
\ref{p:cqiiib}:
Using \cite[Corollary~16.38(iii)]{Livre1} and~\ref{p:cqi}, from standard 
convex analysis we have
\begin{align}
\Argmin f\cap \Argmin(g+\iota_V)&=
\zer\partial f\cap\zer\partial(g+\iota_V)\nonumber\\
&=\zer\partial f\cap\zer(\nabla g+N_V)\nonumber\\
&\subset\zer(\partial f+\nabla g+N_V)\nonumber\\
&\subset\Argmin(f+g+\iota_V).
\end{align}
Therefore, the hypothesis yields
$\zer(\partial f+\nabla g+N_V)=\Argmin f\cap
\Argmin(g+\iota_V)\neq\emp$.
\end{proof}

The problem under consideration in this section is the following.
\begin{problem}
\label{prob:var}
Let $V$ be a closed vector subspace of $\HH$, let $f\in\Gamma_0(\HH)$,
and let $g\colon\HH\to\RR$ be a differentiable
convex function such that $\nabla g$ is $\beta^{-1}$--Lipschitzian.
The problem is to
\begin{equation}
\minimize{x\in V}{f(x)+g(x)}.
\end{equation}
\end{problem}

Problem~\ref{prob:var} has several applications in partial
differential equations \cite[Section~3]{Merc80}, signal and image
processing
\cite{Aujo05,Aujo06,Byrn02,Cham97,Invp08,Daub04,Eick92,Rudi92,Vese03},
and traffic theory \cite{Sico10,Shef85} among other fields.

In the particular case when $V=\HH$, Problem~\ref{prob:var} 
has been widely studied, the forward-backward splitting can solve
it (see \cite{Sico10,Opti04} and the references therein), and several
applications to multicomponent image processing can be found 
in \cite{Nmtm1} and \cite{Jmiv1}. In the case when $g\equiv0$, 
the partial inverse method in \cite{Spin85} solves
Problem~\ref{prob:var} with some applications
to convex programming. In the general setting,
Problem~\ref{prob:var} can be solved by methods developed in
\cite{Siopt3,Invp08,Fadi12} but without exploiting the structure of
the problem. Indeed, in the algorithms presented in
\cite{Siopt3,Invp08} it is necessary to compute $\prox_g=(\Id+\nabla
g)^{-1}$ and, hence, they do not exploit the fact that $\nabla g$ is
single-valued. In \cite{Fadi12} the method proposed computes
explicitly $\nabla g$, however, it generates auxiliary variables 
for obtaining $P_V$ via product space techniques, which may be
numerically costly in problems with a big number of variables. This
is because this method does not exploit the vector subspace
properties of $V$. The following result provides a method which
exploit the whole structure of the problem and it
follows from Proposition~\ref{c:0} applied to optimization problems.

\begin{proposition}
\label{p:var}
Let $\HH$, $V$, $f$, and $g$ be as in Problem~\ref{prob:var},
let $\gamma\in\left]0,2\beta\right[$, let
$\alpha=\max\{2/3,2\gamma/(\gamma+2\beta)\}$,
let $(\lambda_n)_{n\in\NN}$ be a sequence in
$\left]0,1/\alpha\right[$,
let $(a_n)_{n\in\NN}$ and $(b_n)_{n\in\NN}$ be sequences in $\HH$,
and suppose that
\begin{equation}
\label{e:conderro2}
\sum_{n\in\NN}\lambda_n(1-\alpha\lambda_n)=\pinf\quad\text{and}\quad
\sum_{n\in\NN}\lambda_n(\|a_n\|+\|b_n\|)<\pinf
\end{equation}
and that 
\begin{equation}
\label{e:exivar}
\zer(\partial f+\nabla g+N_V)\neq\emp.
\end{equation}
Moreover let $z_0\in\HH$ and set
\begin{align}
\label{e:algo0var}
(\forall n\in\NN)\quad
&\left 
\lfloor 
\begin{array}{l}
x_n=P_Vz_n\\
y_n=(x_n-z_n)/\gamma\\
s_n=x_n-\gamma P_V\big(\nabla g(x_n)+a_n\big)+\gamma y_n\\
p_n=\prox_{\gamma f}s_n+b_n\\
z_{n+1}=z_n+\lambda_n(p_n-x_n).
\end{array}
\right.
\end{align}
Then, the sequences $(x_n)_{n\in\NN}$ and $(y_n)_{n\in\NN}$ are in 
$V$ and $V^{\bot}$, respectively, and the following hold for 
some solution $\overline{x}$ to Problem~\ref{prob:var} and some
$\overline{y}\in
V^{\bot}\cap\big(\partial
f(\overline{x})+P_V\nabla g(\overline{x})\big)$.
\begin{enumerate}
\item\label{p:vari} $x_n\weakly \overline{x}\,$  and  $\,y_n\weakly
\overline{y}$.
\item\label{p:varii} $x_{n+1}-x_n\to 0\,$  and  $\,y_{n+1}-y_n\to 0$.
\item\label{p:variii}
$\sum_{n\in\NN}\lambda_n\|P_V\big(\nabla g(x_n)-\nabla
g(\overline{x})\big)\|^2<\pinf$.
\end{enumerate}
\end{proposition}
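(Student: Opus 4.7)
The plan is to recast Problem~\ref{prob:var} as an instance of Problem~\ref{prob:1} and then apply Theorem~\ref{t:0} verbatim. Setting $A = \partial f$ and $B = \nabla g$, I would first check that all the hypotheses of Problem~\ref{prob:1} are satisfied: $A = \partial f$ is maximally monotone because $f \in \Gamma_0(\HH)$; by the Baillon--Haddad theorem, since $g$ is convex, differentiable, and $\nabla g$ is $\beta^{-1}$--Lipschitzian on $\HH$, the gradient $\nabla g$ is in fact $\beta$--cocoercive on $\HH$, so a fortiori it is $\beta$--cocoercive in $V$ as required by \eqref{e:coco}. The nonemptiness of the solution set $Z = \zer(\partial f + \nabla g + N_V)$ is exactly the standing assumption \eqref{e:exivar}.

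Next I would observe, using \eqref{e:prox2}, that $J_{\gamma A} = J_{\gamma \partial f} = \prox_{\gamma f}$, so the iteration \eqref{e:algo0var} is literally \eqref{e:algo0} with this substitution. The remaining hypotheses on $\gamma$, $\alpha$, $(\lambda_n)_{n\in\NN}$, $(a_n)_{n\in\NN}$, and $(b_n)_{n\in\NN}$ in \eqref{e:conderro2} match \eqref{e:conderro} word for word. Hence Theorem~\ref{t:0} applies and yields the existence of $\overline{x} \in Z$ and $\overline{y} \in V^{\bot} \cap (\partial f(\overline{x}) + P_V \nabla g(\overline{x}))$ together with the three convergence conclusions \ref{p:vari}, \ref{p:varii}, and \ref{p:variii}.

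The only step that is not purely notational is to upgrade ``$\overline{x}$ solves the inclusion'' to ``$\overline{x}$ solves Problem~\ref{prob:var}''. This is handled by Proposition~\ref{p:cq}\ref{p:cqi}: since $V \cap \dom f \neq \emp$ (which follows from \eqref{e:exivar}, as any point of $Z$ lies in $V \cap \dom \partial f \subset V \cap \dom f$), we have $\zer(\partial f + \nabla g + N_V) \subset \Argmin(f + g + \iota_V)$, so $\overline{x}$ is indeed a minimizer of $f + g$ over $V$.

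The main obstacle, if any, is simply keeping the invocations straight: the cocoercivity of $\nabla g$ on $\HH$ via Baillon--Haddad is what lets us use Theorem~\ref{t:0} (whose $B$ need only be cocoercive in $V$), and the subdifferential calculus underlying Proposition~\ref{p:cq}\ref{p:cqi} is what converts the monotone inclusion conclusion back into an optimization conclusion. Everything else is direct substitution.
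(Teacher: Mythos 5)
Your proposal is correct and follows essentially the same route as the paper: set $A=\partial f$, $B=\nabla g$, invoke Baillon--Haddad for the $\beta$--cocoercivity of $\nabla g$ and the maximal monotonicity of $\partial f$, identify $J_{\gamma\partial f}=\prox_{\gamma f}$ via \eqref{e:prox2}, apply Theorem~\ref{t:0}, and pass from the inclusion to the minimization via Proposition~\ref{p:cq}\ref{p:cqi}. Your explicit verification that \eqref{e:exivar} guarantees $V\cap\dom f\neq\emp$ (so that Proposition~\ref{p:cq}\ref{p:cqi} applies) is a detail the paper leaves implicit, but the argument is the same.
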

\begin{proof}
It follows from Baillon--Haddad theorem \cite{Bail77} (see also
\cite{Jca10}) that $\nabla g$ is $\beta$--cocoercive and, in addition,
$\partial f$
is maximally monotone. Therefore, the results
follow from Theorem~\ref{t:0}, 
Proposition~\ref{p:cq}\ref{p:cqi}, 
and \eqref{e:prox2} by taking 
$A=\partial f$ and $B=\nabla g$.
\end{proof}

\begin{remark}\
\begin{enumerate}
\item Conditions for assuring condition \eqref{e:exivar} are 
provided in Proposition~\ref{p:cq}\ref{p:cqii}.
\item Set $a_n\equiv0$ and $b_n\equiv0$, let
$\gamma\in\left]0,2\beta\right[$, and let $(\lambda_n)_{n\in\NN}$ be
a sequence in $\left[\varepsilon,1\right]$ for some
$\varepsilon\in\left]0,1\right[$. Then it follows from
Remark~\ref{r:simil} that the algorithm in Proposition~\ref{p:var}
coincides with the routine: let $x_0\in V$, let
$y_0\in V^{\bot}$, and set
\begin{align}
\label{e:algovar2}
(\forall n\in\NN)\quad
&\left 
\lfloor 
\begin{array}{l}
s_n=x_n-\gamma P_V \nabla g(x_n)+\gamma y_n\\
p_n=\prox_{\gamma f}s_n\\
y_{n+1}=y_n+(\lambda_n/\gamma)(P_Vp_n-p_n)\\
x_{n+1}=x_n+\lambda_n(P_Vp_n-x_n),
\end{array}
\right.
\end{align}
which is the method proposed in Corollary~\ref{c:0} applied to
Problem~\ref{prob:var}.

\item
Recently in \cite{Cond12} an algorithm is proposed for solving
simultaneously 
\begin{equation}
\label{e:condat}
\minimize{x\in\HH}{f(x)+g(x)+h(Lx)},
\end{equation}
and its dual, where $\GG$ is a real Hilbert space, $h\in\Gamma_0(\GG)$,
and $L\colon\HH\to\GG$ is linear and bounded. In the particular case
when $\GG=\HH$, $h=\iota_V$, and $L=\Id$, \eqref{e:condat} reduces to
Problem~\ref{prob:var}. In this case, the method is different to 
\eqref{e:algo0var} and, additionally, it needs a more restrictive 
condition on the proximity parameter and the gradient step when 
the constants involved are equal.  

\item Consider the problem involving $N$ convex functions
\begin{equation}
\label{e:minfin}
\minimize{\mathsf{x}\in\mathsf{V}}{\sum_{i=1}^N
\mathsf{f}_i(\mathsf{x})+\mathsf{g}(\mathsf{x})},
\end{equation}
where $\mathsf{H}$ is a real Hilbert space, $\mathsf{V}$ is a closed
vector subspace of $\mathsf{H}$, $(\mathsf{f}_i)_{1\leq i\leq N}$ are
functions in $\Gamma_0(\mathsf{H})$, and $\mathsf{g}$ is convex
differentiable with Lipschitz gradient. Under qualification
conditions, \eqref{e:minfin} can be reduced to Problem~\ref{prob:Fad}
with $m=N+1$, for every $i\in\{1,\ldots,N\}$,
$\mathsf{A}_i=\partial\mathsf{f}_i$, $\mathsf{A}_{N+1}=N_\mathsf{V}$,
and $\mathsf{B}=\nabla\mathsf{g}$. Hence, Proposition~\ref{p:4}
provides an algorithm that solves \eqref{e:minfin}, which generalizes
the method in \cite{Fadi12} in this context by allowing a larger class
of relaxation parameters and errors.
\end{enumerate}
\end{remark}

\end{document}